\documentclass{article}

\usepackage[T1]{fontenc}     
\usepackage{amsthm}  
\usepackage{amsmath}
\usepackage{amsfonts}
\usepackage{tikz}
\usepackage[capitalize, noabbrev]{cleveref}
\usepackage{tikz-cd}
\usepackage{amssymb}
\usepackage{filecontents}
\usepackage{graphicx}
\usepackage{subcaption}
\usepackage{float}

\newtheorem{theo}{Theorem}[section]
\newtheorem*{thaupt}{Haupt's theorem}

\newtheorem{cor}[theo]{Corollary}
\newtheorem{prop}[theo]{Proposition}
\newtheorem{lemma}[theo]{Lemma}
\theoremstyle{remark}{}

\theoremstyle{definition}

\newcommand{\Per}{\mathrm{Per}}
\newcommand{\C}{\mathbb C}
\newcommand{\Z}{\mathbb Z}
\newcommand{\R}{\mathbb R}

\newcommand{\Hom}{\mathrm{Hom}}

\newcommand{\vol}[1]{\mathrm{Vol}(#1)}

\newcommand{\SL}[1]{\mathrm{SL}_2 (#1)}
\newcommand{\Mod }{\mathrm{Mod}}

\author{Thomas Le Fils}
\date{}

\begin{document}
\title{Periods of abelian differentials with prescribed singularities}

\maketitle

\begin{abstract}
We give a necessary and sufficient condition for a representation of the fundamental group of a closed surface of genus at least $2$ to $\C$ to be  the holonomy of a translation surface with a prescribed list of conical singularities. Equivalently, we determine the period maps of abelian differentials with prescribed list of multiplicities of zeros. Our main result was also obtained, independently, by Bainbridge, Johnson, Judge and Park.
\end{abstract}

\section{Introduction}

Let $S$ be a closed oriented surface of genus $g\geqslant 2$. Denote by $\Gamma$ a fundamental group of $S$. 

If $X$ is a complex structure on $S$, an \textit{abelian differential} on $X$ is a holomorphic 1-form on $X$. Let us denote by $\Omega$ the space of $(X, \alpha)$ where $X$ is a complex structure on $S$ and $\alpha$ an abelian differential on $X$ and by $\Omega'$ the subset of $\Omega$ where $\alpha$ is not the zero form. The \textit{period} of $\alpha\in \Omega$ is the homomorphism $\chi\in \Hom(\Gamma, \C)$ defined by $\chi(\gamma)= \int_\gamma \alpha$. We thus define a map
$$\Per : \Omega\to \Hom(\Gamma, \C).$$

We define the volume of $\chi\in \Hom(\Gamma, \C)$ to be $\sum \Im(\overline{\chi(a_i)}\chi(b_i))$, where $(a_1, b_1, \ldots, a_g, b_g)$ is a standard symplectic basis of $\Gamma$.
In \cite{Haupt}, Haupt computed the image of $\Per$ by $\Omega'$.

\begin{thaupt}
The image of $\Omega'$ by $\Per$ is the set of $\chi\in \Hom(\Gamma, \C)$ such that
\begin{enumerate}
\item $\vol \chi > 0$
\item  if $\Lambda = \chi(\Gamma)$ is a lattice in $\C$, then $\vol \chi \geqslant 2\mathrm{Area}(\C / \Lambda)$.
\end{enumerate}
\end{thaupt}
Let us recall that the sum of the degrees of zeros of an abelian differential is $2g-2$. Therefore, the space $\Omega'$ is naturally stratified by the subsets $\mathcal H(n_1, \ldots, n_k)$ consisting of $\alpha\in \Omega'$ with zeros of multiplicity $n_1, \ldots n_k$, where $\sum n_i = 2g-2$. As a refinement of Haupt's theorem, we characterise the period maps of abelian differentials in a given stratum, thus answering a question raised by Calsamiglia, Deroin and Francaviglia in \cite{deroin}.

\begin{theo}\label{resultat}
Let $1 \leqslant n_1\leqslant n_2, \ldots \leqslant n_k$ be such that $\sum n_i = 2g-2$. The image of $\mathcal H(n_1, \ldots, n_k)$ by $\mathrm{Per}$ is the set of $\chi \in \Hom(\Gamma, \C)$ satisfying: \begin{enumerate}
\item $\vol \chi > 0$
\item if $\Lambda = \chi(\Gamma)$ is a lattice in $\C$, then $\vol \chi \geqslant(n_k+1) \mathrm{Area}(\C /\Lambda)$.

\end{enumerate}
\end{theo}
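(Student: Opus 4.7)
I would first note that $\vol \chi$ equals the total area of the translation surface associated with $\alpha$, which gives (1). For (2), assume $\Lambda = \chi(\Gamma)$ is a lattice; then integration of $\alpha$ from a basepoint yields a well-defined holomorphic map $f \colon X \to \C/\Lambda$ with $\alpha = f^* dz$, whose degree $d$ satisfies $\vol \chi = d \cdot \mathrm{Area}(\C/\Lambda)$. A zero of $\alpha$ of order $n_i$ is a ramification point of $f$ of local degree $n_i + 1$, so $d \geqslant n_i + 1$ for every $i$, and in particular $d \geqslant n_k + 1$, giving (2).

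\textbf{Sufficiency, outline.} Given $\chi$ satisfying (1) and (2), I would produce $\alpha \in \mathcal H(n_1, \ldots, n_k)$ with $\Per(\alpha) = \chi$ by modifying an auxiliary $\alpha_0 \in \Omega'$ with the same period. Since $n_k \geqslant 1$, our hypothesis implies Haupt's, so such an $\alpha_0$ exists in some (a priori unspecified) stratum $\mathcal H(\mu)$. The modification would rely on two classical local surgeries that preserve absolute periods: splitting a zero of order $m$ into zeros of orders $p, q$ with $p + q = m$ (by opening a contractible slit), and colliding two zeros joined by a saddle connection into a single zero of the summed order. First, split every zero of $\alpha_0$ to reach the principal stratum $\mathcal H(1, \ldots, 1)$; then, while remaining in the fibre $\Per^{-1}(\chi)$, use the isoperiodic (kernel-foliation) deformations to bring groups of simple zeros into saddle-connection position and collide them in groups of sizes $n_1, \ldots, n_k$ to land in $\mathcal H(n_1, \ldots, n_k)$.

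\textbf{Main obstacle.} The delicate case is the equality case $\vol \chi = (n_k + 1) \mathrm{Area}(\C/\Lambda)$ with $\chi(\Gamma) = \Lambda$ a lattice. Any admissible $\alpha$ must then factor as a branched cover $f \colon X \to \C/\Lambda$ of degree exactly $n_k + 1$, the fibre $\Per^{-1}(\chi)$ is finite, and the isoperiodic approach above has no room to operate. I would handle this separately by constructing the cover directly: choose a finite branch locus $B \subset \C/\Lambda$ and a transitive monodromy representation $\pi_1(\C/\Lambda \setminus B) \to S_{n_k+1}$ whose ramification profile realises the multiplicities $(n_1 + 1, \ldots, n_k + 1)$, with full ramification above the point corresponding to the zero of order $n_k$. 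The Riemann--Hurwitz relation is automatically consistent with $\sum n_i = 2g - 2$, so the remaining issue is the combinatorial existence of such a monodromy. I expect this boundary case, together with the verification that the collision surgeries can be carried out inside a prescribed fibre of $\Per$ in the generic case, to be where the real work lies.
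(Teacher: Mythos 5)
Your necessity argument is correct and is essentially the paper's. For sufficiency you propose a genuinely different route: start from Haupt's theorem, pass to the principal stratum by splitting zeros, and then collide simple zeros in prescribed groups while moving inside the fibre $\Per^{-1}(\chi)$. The paper instead normalizes $\chi$ by the commuting $\Mod(S)$ and $\mathrm{GL}_2^+(\R)$ actions (Kapovich's Ratner-theoretic classification of $\Mod(S)$-orbits, refined in \cref{ameliorerlattice} and \cref{ameliorer}) and then realizes the normalized holonomy directly by explicit slit-and-glue constructions in each stratum. The problem is that the two steps you defer are exactly where all the content lies, and neither is routine. Colliding two zeros joined by a short saddle connection is indeed a local, period-preserving surgery; but arranging that the $2g-2$ simple zeros of some surface \emph{in the same fibre} $\Per^{-1}(\chi)$ cluster into groups of sizes $n_1,\ldots,n_k$ in saddle-connection position is a global reachability statement about isoperiodic leaves (their connectedness and the behaviour of the relative-period map along them). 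That statement is comparable in depth to the theorem itself --- it is essentially the transfer principle of Calsamiglia--Deroin--Francaviglia --- so invoking it without proof leaves the generic case open.

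The lattice case has a more serious defect than you indicate. First, the dichotomy is not ``equality case versus the rest'': whenever $\chi(\Gamma)=\Lambda$ is a lattice, every surface in $\Per^{-1}(\chi)$ is a branched cover of $\C/\Lambda$ of the fixed degree $d=\vol\chi/\mathrm{Area}(\C/\Lambda)$, so the isoperiodic deformations are confined to a Hurwitz-type space for \emph{every} $d$, not only $d=n_k+1$; and the fibre restricted to the stratum is finite only when $k=1$, since for $k\geqslant 2$ the relative periods still vary. Second, and more importantly, your monodromy construction produces a cover with the right ramification profile but says nothing about the induced map on homology, whereas the theorem prescribes the specific homomorphism $\chi\in\Hom(\Gamma,\Lambda)$, not merely its image and degree. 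Two covers with identical branching data generally induce different maps $H_1(S,\Z)\to\Lambda$. To close this gap you would have to show that your constructed cover's period lies in the $\Mod(S)\cdot\mathrm{GL}_2^+(\R)$-orbit of $\chi$; this is precisely the normal form of \cref{ameliorerlattice} (proved via transitivity of $\mathrm{Sp}_{2g}(\Z)$ on primitive vectors and a Euclidean-algorithm reduction), and it is absent from your outline. Note that the paper's \cref{corcor}, which characterizes the induced maps on homology of branched covers of elliptic curves, is \emph{deduced from} the main theorem, so one cannot simply cite the existence theory of branched covers here without becoming circular.
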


These obstructions where already observed in \cite{deroin}. The fact that every $\chi\in \Hom(\Gamma, \C)$ of positive volume and non-discrete image is in the image of $\Per$ by every stratum is a consequence of their work. Our proof will nevertheless include this case.

In order to explain the origins of these obstructions on the volume, it is convenient to introduce a more geometric point of view on this problem.
A \textit{translation surface}, or \textit{flat surface} structure on $S$ is the datum of an atlas of chart in the euclidean plane $\mathbb E^2$, with transition maps that are translations. We allow the charts to have conical points, that is  to have the local form $z\mapsto z^k$. More precisely, a translation surface structure is a branched $(G,X)$ structure on $S$ where $G = \C$ is the group of translations of $X = \mathbb E^2$. The local charts globalize into a \textit{developing map}  $f : \tilde{S}\to \C$, where $\tilde{S}$ is the universal cover of $S$, that satisfies $f(\gamma\cdot z) = \chi(\gamma) + f(z)$ for some $\chi\in \Hom(\Gamma, \C)$ called the \textit{holonomy} of  the translation surface (see \cite[Chapter 3]{Thurston} for more information on $(G, X)$ structures). If $\alpha\in \Omega'$, then one can define local charts on $S$ by $\int_{z_0}^z \alpha$. In this manner, we get a one-to-one correspondence between translation surfaces and abelian differentials, see \cite[Section 3.3]{Zorich}. The period map of an abelian differential corresponds to the holonomy of the associated flat surface, and the stratum $\mathcal H(n_1, \ldots, n_k)$ corresponds to the flat surfaces with $k$ conical points of angles $2\pi(n_1+1), \ldots, 2\pi(n_k+1)$. Hence the question adressed in this article is to characterise the $\chi\in \Hom(\Gamma, \C)$ which are holonomies of flat surfaces with a prescribed list of conical angles.

Suppose that $\chi\in \Hom(\Gamma, \C)$ is the period map of $\alpha\in \Omega'$. The Riemann bilinear relations (see \cite[Section 14]{Narasimhan}) give:
$$\frac{i}{2}\int_S \alpha\wedge\overline\alpha = \frac{i}{2} \sum_{i=1}^g \int_{a_i} \alpha \int_{b_i}\overline\alpha - \int_{a_i} \overline\alpha \int_{b_i} \alpha = \sum_{i=1}^g \Im(\overline{\chi(a_i)}\chi(b_i)).$$\label{computation}

Let $z = x + iy$ be a local coordinate on $U\subset S$, and write $\alpha = f(z)dz$.  We have $\int_U |f|^2dxdy = \frac{i}{2}\int_U \alpha\wedge\overline{\alpha} > 0$. A first obstruction to being the holonomy of a flat structure is thus to have positive volume.

Suppose now moreover that $\Lambda = \chi(\Gamma)$ is a lattice in $\C$. Then the developing map of the translation surface induces a holomorphic map $S\to \C/\Lambda$. If $\alpha$ has a zero of degree $n$, then the degree $d$ of this branched cover is at least $n+1$. The volume of $\chi$ is $d\cdot\mathrm{Area}(\C/\Lambda)$. Therefore a second obstruction to being the holonomy of $\alpha\in \mathcal H(n_1, \ldots, n_k)$ is to have $\vol \chi \geqslant (n_k+1) \mathrm{Area}(\C / \Lambda)$ when $\Lambda = \chi(\Gamma)$ is a lattice in $\C$.

\cref{resultat} implies the existence of branched cover of elliptic curves with prescribed induced map on homology.
\begin{cor}\label{corcor}

Let $\Lambda$ be a lattice in $\C$ and $1\leqslant n_1\leqslant \ldots \leqslant n_k$. 
There exists a branched cover $S\to \C/\Lambda$ of degree $d$ with branching points of order $n_1, \ldots, n_k$ if and only if $\sum n_i = 2g-2$ and  $d\geqslant n_k+1$.
Moreover, if $\chi \in \Hom(\Gamma, \Lambda)$, then $\chi$ is induced by one of these covering maps if and only if \begin{enumerate}
\item $\vol \chi = d \cdot\mathrm{Area}(\C/\Lambda)$
\item $\vol \chi \geqslant (n_k+1)\mathrm{Area}(\C/\Lambda')$, where $\Lambda' = \chi(\Lambda)$.
\end{enumerate}
\end{cor}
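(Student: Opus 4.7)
The plan is to deduce \cref{corcor} directly from \cref{resultat} via the standard dictionary between branched covers $\pi:S\to \C/\Lambda$ and abelian differentials whose periods lie in $\Lambda$. In one direction, if such a $\pi$ is given, then $\alpha := \pi^\ast dz$ lies in $\mathcal{H}(n_1,\dots,n_k)$ because the local form of $\pi$ at a branch point is $z\mapsto z^{n_i+1}$. In the other, if $\alpha\in \mathcal{H}(n_1,\dots,n_k)$ has periods $\chi := \Per(\alpha)$ valued in $\Lambda$, then the developing map $f:\tilde S\to\C$ of the associated translation surface descends to a branched cover $\pi:S\to \C/\Lambda$ whose degree equals $\vol\chi/\mathrm{Area}(\C/\Lambda)$ and whose ramification profile is exactly the multiplicities of the zeros of $\alpha$; this is routine but should be verified once by inspecting the local normal form $\alpha = d(z^{n_i+1}/(n_i+1))$ near each zero.

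For the first statement of the corollary, necessity reduces to Riemann--Hurwitz (which forces $\sum n_i = 2g-2$) and to the same local form at branch points (which forces $d\geqslant n_k+1$). For sufficiency, given $d\geqslant n_k+1$, I would write down an explicit $\chi\in\Hom(\Gamma,\Lambda)$ with $\chi(\Gamma)=\Lambda$ and $\vol\chi = d\cdot\mathrm{Area}(\C/\Lambda)$: using $g\geqslant 2$ (a consequence of $n_k\geqslant 1$ and $\sum n_i = 2g-2$) and a basis $(\omega_1,\omega_2)$ of $\Lambda$ with $\Im(\overline{\omega_1}\omega_2) = \mathrm{Area}(\C/\Lambda)$, one can set $\chi(a_1)=\omega_1$, $\chi(b_1)=\omega_2$, $\chi(a_2)=\omega_1$, $\chi(b_2)=(d-1)\omega_2$ and send every remaining generator to $0$. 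The inequality $d\geqslant n_k+1$ makes both hypotheses of \cref{resultat} hold, so some $\alpha\in\mathcal{H}(n_1,\dots,n_k)$ has $\Per(\alpha)=\chi$, and its associated $\pi$ is the desired branched cover.

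For the ``moreover'' part, both directions are unpackings of the dictionary. If $\chi$ arises from a degree-$d$ cover $\pi$, the Riemann bilinear computation on page~\pageref{computation} gives $\vol\chi = d\cdot\mathrm{Area}(\C/\Lambda)$, and factoring $\pi$ as $S\to \C/\Lambda'\to \C/\Lambda$ (the second factor unbranched of degree $m = [\Lambda:\Lambda']$) produces a branched cover of degree $d/m\geqslant n_k+1$, which rearranges to condition~(2). Conversely, given $\chi$ satisfying (1) and (2), condition~(1) forces $\vol\chi>0$, hence the discrete group $\Lambda':=\chi(\Gamma)\subseteq\Lambda$ must have rank $2$ (lower rank would make every $\overline{\chi(a_i)}\chi(b_i)$ real, hence $\vol\chi=0$); so $\Lambda'$ is a lattice and (2) is exactly the second hypothesis of \cref{resultat}. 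The theorem provides $\alpha\in\mathcal{H}(n_1,\dots,n_k)$ with $\Per(\alpha)=\chi$, and descending its developing map yields the required cover. No step here looks genuinely hard; the main thing to be careful about is the descent statement in the opening dictionary.
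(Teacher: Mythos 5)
Your proof is correct and follows essentially the same route as the paper: both directions are reduced to \cref{resultat} through the dictionary between branched covers of $\C/\Lambda$ and abelian differentials with periods in $\Lambda$, with the volume identified via the Riemann bilinear relations and condition (2) extracted from the intermediate cover $S\to\C/\Lambda'$. Your only addition is the explicit $\chi$ witnessing existence for every $d\geqslant n_k+1$, a point the paper instead defers to the literature (and which checks out: that $\chi$ has image $\Lambda$ and volume $d\cdot\mathrm{Area}(\C/\Lambda)$).
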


\begin{proof}
Let us consider a covering map $S\to \C/\Lambda$ of degree $d$ with branching points of order $n_1, \ldots, n_k$. Its degree is at least $n_k+1$. The pullback $\alpha$ of $dz$ by this branched cover is in $\mathcal{H}(n_1, \ldots, n_k)$ and thus $\sum n_i = 2g-2$. The period map of $\alpha$ is the map induced on homology by this covering, and has volume $d\cdot\mathrm{Area}(\C/\Lambda)$ by the computation of $\int \alpha\wedge\overline{\alpha}$ above. Observe that $\Lambda' = \chi(\Gamma)$ is a lattice since it is a subgroup of $\Lambda$ and $\vol\chi > 0$. The developing map of the translation surface associated to $\alpha$ gives a branched cover $S\to \C/\Lambda'$, whose degree is at least $n_k+1$ and equals $\vol \chi / \mathrm{Area}(\C/\Lambda')$.

Conversely, given $\chi$ that satisfies those two conditions, then by \cref{resultat} there exists a translation surface with conical angle $2\pi(n_1+1),\ldots,2\pi(n_k+1)$ and holonomy $\chi$. Its developing map gives a branched cover $S\to \C/\Lambda$ with induced map on homology $\chi$. The degree of this covering map is $\vol\chi/\mathrm{Area}(\C/\Lambda) = d$, and the orders of its branching points are $n_1, \ldots, n_k$. 
\end{proof}
Note that these conditions on the existence of branched covers are not new, see for example \cite{Pervova}. Therefore \cref{corcor} is actually a characterisation of the induced maps on homology of the branched covers of elliptic curves.

Let us now explain the strategy of the proof.
We denote by $\Mod(S)$ the mapping class group of $S$.
Our proof relies on the study of the $\Mod(S)$ action on $\Hom(\Gamma, \C)$, that was carried out by Kapovich in \cite{Kapovich}, using Ratner's theory. Starting from an arbitrary $\chi\in \Hom(\Gamma, \C)$, Kapovich finds $\chi'\in \Mod(S)\cdot\chi$ with special properties that allows him to put a translation surface structure on $S$ with holonomy $\chi'$. 
This strategy is the same as Haupt's original one, except for the use of Ratner's theory.
We will adapt the discussion of the classification of $\Mod(S)$ orbits by Kapovich for  our purpose in \cref{Modaction}. Then we will use this classification to construct translation surface structures with given holonomy and given list of conical angles in \cref{construction}. In \cite{Kapovich}, the classification of the orbits holds only for $g\geqslant 3$, hence we will study the genus $2$ case without Ratner's theory in the spirit of Haupt's proof in \cref{genre2}.

\subsection*{Acknowledgements}

I would like to thank my advisor Maxime Wolff for his help, and Bertrand Deroin for his advice. While I was writing the present article, an independent proof of \cref{resultat} was published online by Bainbridge, Johnson, Judge and Park \cite{Judge}. I wish to thank them for their encouragement to post the present article. I also thank Julien March\'e, L\'eo B\'enard and Thibaut Mazuir for helpful conversations.

\section{Action of the mapping class group}\label{Modaction}

We will often identify $\Hom(\Gamma, \C)$ and $\C^{2g}$ with the map $\chi\mapsto (\chi(a_1),\ldots, \chi(b_g))$.
The group $\mathrm{GL}^+_2(\R)$ acts diagonally on $ (\R^2)^{2g} \simeq \C^{2g} \simeq \Hom(\Gamma, \C)$. A theorem of Dehn, Nielsen, and Baer, see \cite[Chapter 8]{FarbMargalit}, states that the natural map from $\Mod(S)$ to the positive outer automorphisms $\mathrm{Out}^+(\Gamma)$ of $\Gamma$ is an isomorphism. This latter group acts naturally on $\Hom(\Gamma, \C)$ by precomposition and thus yields a $\Mod(S)$ action on $\Hom(\Gamma, \C)$ that commutes with the $\mathrm{GL}_2^+(\R)$ action. Recall that $\mathrm{Out}^+(\Gamma)$ acts as $\mathrm{Sp}_{2g}(\Z)$ on $\Gamma^{\mathrm{ab}} \simeq H_1(S, \Z)\simeq \Z^{2g}$ and hence on $\C^{2g}$, see \cite[Chapter 6]{FarbMargalit}.
Let us state results obtained by Kapovich in \cite{Kapovich} using Ratner's theory.

\begin{theo}[Kapovich]\label{Kapovich} Let $\chi\in \Hom(\Gamma, \C)$ be such that $\vol \chi >0$.  After replacing $\chi$ by $A\cdot \chi$ with $A\in \mathrm{GL}_2^+(\R)$ if necessary, then
\begin{enumerate}
\item If $\chi(\Gamma)$ is not a lattice, there exists $\gamma\in \Mod(S)$ such that $\chi' =\gamma\cdot\chi$ satisfies $\Im(\overline{\chi'(a_i)} \chi'(b_i)) > 0$ for all $1 \leqslant i\leqslant g$.
\item If $\Lambda = \chi(\Gamma)$ is a lattice and $\vol \chi \geqslant 2 \mathrm{Area}(\C/\Lambda)$, then there exists $\gamma\in \Mod(S)$ such that $\chi' =\gamma\cdot \chi$ satisfies:
\begin{enumerate}
\item $\chi'(\Gamma)=\mathbb Z^2$.
\item $\chi'(a_1)$ is a positive integer and $\chi'(b_1) = i$.
\item $0 < \chi'(a_i) < \chi'(a_1)$ is an integer and $\chi'(b_i) = 0$ for all $i\geqslant 2$.
\end{enumerate}
\end{enumerate}
\end{theo}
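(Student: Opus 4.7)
The plan is to treat parts (1) and (2) separately. Both begin by invoking Dehn-Nielsen-Baer so the $\Mod(S)$-action on $\Hom(\Gamma, \C) \cong \C^{2g}$ factors through $\mathrm{Sp}_{2g}(\Z)$, and by writing $\chi = u + iv$ with $u, v \in \R^{2g}$, so that the volume becomes $\omega(u, v)$ for the standard symplectic form $\omega$. The commuting $\mathrm{GL}_2^+(\R)$-action is freely available for preliminary normalizations. Throughout I assume $g \geq 3$, matching Kapovich's setting; the genus $2$ case is handled separately in \cref{genre2}.

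For part (1), the key input is Ratner's theorem on orbit closures of subgroups generated by unipotents. The $\mathrm{Sp}_{2g}(\R)$-orbit of a linearly independent pair $(u, v)$ with $\omega(u,v)=\vol\chi$ is the entire locus of such pairs, and is identified with $\mathrm{Sp}_{2g}(\R)/\mathrm{Sp}_{2g-2}(\R)$ via the stabilizer. The $\mathrm{Sp}_{2g}(\Z)$-orbit of $(u, v)$ within corresponds to an $\mathrm{Sp}_{2g-2}(\R)$-orbit on the lattice quotient $\mathrm{Sp}_{2g}(\Z) \backslash \mathrm{Sp}_{2g}(\R)$. Since $\mathrm{Sp}_{2g-2}(\R)$ (being semisimple without compact factors) is generated by its one-parameter unipotent subgroups, Ratner's theorem asserts that this orbit closure is a closed homogeneous subvariety. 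The non-lattice hypothesis on $\chi(\Gamma) = \Z u + \Z v$ is what rules out every proper intermediate closure (after a harmless $\mathrm{GL}_2^+(\R)$-adjustment if needed), so the $\mathrm{Sp}_{2g}(\Z)$-orbit is dense in the full level set. Since $\Im(\overline{\chi'(a_i)}\chi'(b_i)) > 0$ for all $i$ is an open condition holding on a non-empty open subset of that level set (e.g., where each $(a_i, b_i)$-block is proportional to $(1, i)$), density supplies the desired $\chi'$.

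For part (2), use $\mathrm{GL}_2^+(\R)$ to normalize $\Lambda = \Z[i]$ so that $\chi : \Gamma \to \Z^2$ is surjective, and represent $\chi$ by a $2 \times 2g$ integer matrix with primitive rows $\chi_1 = \mathrm{Re}\,\chi$ and $\chi_2 = \mathrm{Im}\,\chi$. Transitivity of $\mathrm{Sp}_{2g}(\Z)$ on primitive vectors of $\Z^{2g}$ (and its symplectic dual) allows me to send $\chi_2$ to the coordinate functional $b_1^*$; this achieves $\chi'(b_1) = i$ and forces $\chi'(a_i), \chi'(b_j) \in \R$ for $i \geq 1$, $j \geq 2$. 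The stabilizer of $b_1^*$ in $\mathrm{Sp}_{2g}(\Z)$ still contains an $\mathrm{Sp}_{2g-2}(\Z)$-factor on the symplectic complement $\langle a_2, b_2, \ldots, a_g, b_g\rangle$, the shear $b_1 \mapsto b_1 - n a_1$, and the coupled transvections $a_i \mapsto a_i + n a_1$, $b_1 \mapsto b_1 - n b_i$; combining these, I zero out each $\chi'_1(b_i)$ and reduce each $\chi'(a_i)$ modulo $\chi'(a_1)$ for $i \geq 2$. The hypothesis $\vol\chi \geq 2\,\mathrm{Area}(\C/\Z[i]) = 2$ forces $\chi'(a_1) \geq 2$, leaving room to enforce the strict inequalities $0 < \chi'(a_i) < \chi'(a_1)$.

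The substantive obstacle is part (1): packaging the $\mathrm{Sp}_{2g}(\Z)$-orbit as a unipotent-generated orbit on a homogeneous quotient, and verifying via Ratner's classification that the non-lattice hypothesis excludes all proper intermediate closures, constitutes the core of the argument. Part (2) is comparatively elementary, being a finite-step integer symplectic reduction whose only non-routine ingredient is the use of the volume inequality to obtain the final strict ordering.
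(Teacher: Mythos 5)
The first thing to note is that the paper does not prove part (1) of \cref{Kapovich} at all: the theorem is quoted from \cite{Kapovich}, and the only part the paper reproves is part (2), via \cref{transitive} and the explicit integer symplectic reduction carried out in the proof of \cref{ameliorerlattice}. Your treatment of part (2) is essentially that same argument --- normalize $\Lambda$ to $\Z[i]$, move $\Im\chi$ to a coordinate functional using transitivity of $\mathrm{Sp}_{2g}(\Z)$ on primitive vectors, then clean up with transvections in the stabilizer --- and it is correct in outline. One point you should make explicit: sending $\Im\chi$ to $b_1^*$ only gives $\chi'(b_1)=q+i$ for some integer $q$, and killing $q$ requires writing it as an integer combination of the real parts of the generators in $\ker(\Im\chi')$, which uses the surjectivity $\chi(\Gamma)=\Z^2$; this is exactly the step ``$q+\lambda p+\mu l=0$'' in the paper's proof of \cref{ameliorerlattice}.

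Part (1) contains a genuine gap. Your claim that the non-lattice hypothesis makes the $\mathrm{Sp}_{2g}(\Z)$-orbit dense in the full level set $\{\omega(u,v)=\vol\chi\}$ is false in general. If, for instance, $\Im\chi$ is a primitive vector of $\Z^{2g}$ while $\Re\chi$ has dense image in $\R$ (so $\chi(\Gamma)$ is non-discrete, hence not a lattice), then the condition $\Im\chi'\in\Z^{2g}$ is $\mathrm{Sp}_{2g}(\Z)$-invariant and closed, so the orbit closure lies in a proper closed subset of the level set; no element of $\mathrm{GL}_2^+(\R)$ repairs this, since the closure of $\chi(\Gamma)$ being of the form $\R\times\Z$ up to linear equivalence is an invariant of the combined $\mathrm{GL}_2^+(\R)\times\Mod(S)$ action. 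The correct output of the Ratner analysis --- and the one the paper actually uses later, via \cite[Proposition 8.1]{deroin} in the proof of \cref{ameliorer} --- is that after a $\mathrm{GL}_2^+(\R)$-normalization the orbit closure contains every $\chi'$ of the same volume whose imaginary part is a primitive vector of $\Z^{2g}$. This weaker statement still yields part (1), because one can exhibit such a $\chi'$ with $\Im(\overline{\chi'(a_i)}\chi'(b_i))>0$ for all $i$ and then upgrade from the closure to the orbit by openness of that condition. Finally, the classification of closed subgroups intermediate between $\mathrm{Sp}_{2g-2}(\R)$ and $\mathrm{Sp}_{2g}(\R)$ --- the actual technical content of Kapovich's argument, and the reason the statement is restricted to $g\geqslant 3$ --- is asserted rather than carried out, so as written your part (1) locates where a proof would live rather than giving one.
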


We will need the following improvement of the lattice case.
\begin{prop}\label{ameliorerlattice}
In the second case of \cref{Kapovich}, we can moreover assume that $\chi'(a_i) = m_i$ for all $2 \leqslant i\leqslant g$, where $m_i\in \{1,2\}$, and $m_2=1$.
\end{prop}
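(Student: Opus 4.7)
The plan is to post-compose the $\chi$ produced by case (2) of \cref{Kapovich} with additional mapping class group elements, chosen so as to leave $\chi(a_1) = N$, $\chi(b_1) = i$ and $\chi(b_j) = 0$ (for $j \geq 2$) untouched, while acting non-trivially on the tuple $(n_2, \ldots, n_g) := (\chi(a_2), \ldots, \chi(a_g)) \in \Z^{g-1}$. The key preliminary observation is that $\chi(\Gamma) = \Z \oplus i\Z$ forces $\gcd(N, n_2, \ldots, n_g) = 1$, so that $(n_2, \ldots, n_g)$ reduces to a primitive vector in $(\Z/N\Z)^{g-1}$. (We tacitly assume $g \geq 3$, the range in which \cref{Kapovich} applies.)

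The first step is to exhibit three families of symplectic automorphisms of $\Gamma^{\mathrm{ab}} \simeq \Z^{2g}$, each realised in $\Mod(S)$ via Dehn--Nielsen--Baer and each preserving $\chi(a_1)$, $\chi(b_1)$ and every $\chi(b_j)$ with $j \geq 2$. For $2 \leq i \neq j \leq g$, I use the transvection $a_i \mapsto a_i + a_j$, $b_j \mapsto b_j - b_i$ (identity on the rest), whose effect on $\chi$ is $n_i \mapsto n_i + n_j$; the only potential side effect, on $\chi(b_j)$, vanishes because $\chi(b_i) = 0$. For $i \geq 2$, I use the transvection $a_i \mapsto a_i + a_1$, $b_1 \mapsto b_1 - b_i$, which sends $n_i$ to $n_i + N$, while $\chi(b_1) = i$ is preserved thanks again to $\chi(b_i) = 0$. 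Finally, the involutions $(a_i, b_i) \mapsto (-a_i, -b_i)$ and the swaps $(a_i, b_i) \leftrightarrow (a_j, b_j)$, for $i, j \geq 2$, contribute negation and permutation of the $n_j$. Each family is symplectic by a one-line computation with $\omega(a_k, b_\ell) = \delta_{k\ell}$.

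Combining these, the induced action on $(n_2, \ldots, n_g) \in \Z^{g-1}$ is that of $\mathrm{GL}_{g-1}(\Z) \ltimes N \Z^{g-1}$, which descends modulo $N$ to the standard action of $\mathrm{GL}_{g-1}(\Z/N\Z)$ on $(\Z/N\Z)^{g-1}$. Since $g - 1 \geq 2$, this action is transitive on primitive vectors; as $(n_2, \ldots, n_g)$ is primitive mod $N$ by the preliminary observation, I can transform it to $(1, 1, \ldots, 1)$ modulo $N$, and a final application of the second family then sets each coordinate to the integer $1$. The resulting $\chi'$ satisfies $\chi'(a_j) = m_j \in \{1, 2\}$ with all $m_j = 1$, in particular $m_2 = 1$, and the inequalities $0 < m_j < N$ remain valid since $N \geq 2$.

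The main, essentially bookkeeping, obstacle is the verification that the paired transvections in the first two families are symplectic and that their dual $b$-adjustments do not disturb any of the $\chi(b_*)$-values. Both rest on the vanishing of $\chi(b_j)$ for $j \geq 2$, which is preserved at each step: this is precisely the feature of Kapovich's normal form that makes it useful as a starting point.
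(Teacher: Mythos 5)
Your argument is sound for $g\geqslant 3$ and takes a genuinely different route from the paper. The paper does \emph{not} take Kapovich's lattice normal form as a starting point: its proof of \cref{ameliorerlattice} is a self-contained re-derivation (this is why the text announces an ``alternative proof of the second part of \cref{Kapovich}''), starting from an arbitrary $\chi$ with $\chi(\Gamma)=\Z\oplus i\Z$, using the transitivity of $\mathrm{Sp}_{2g}(\Z)$ on primitive vectors (\cref{transitive}) to normalize the imaginary part, and then running an explicit Euclidean algorithm inside the handle $(a_2,b_2)$ --- crucially allowing $\chi(b_2)$ to become temporarily nonzero --- before cleaning up the remaining handles. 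You instead black-box Kapovich's case (2) and observe that its stabilizing moves still act on $(\chi(a_2),\ldots,\chi(a_g))$ as $\mathrm{GL}_{g-1}(\Z)\ltimes N\Z^{g-1}$; transitivity on unimodular vectors mod $N$ then finishes. Your symplectic verifications are correct, and the only imprecision is that the reduction $\mathrm{GL}_{g-1}(\Z)\to\mathrm{GL}_{g-1}(\Z/N\Z)$ is not onto; but its image contains $\mathrm{SL}_{g-1}(\Z/N\Z)$, which already acts transitively on unimodular vectors when $g-1\geqslant 2$, so nothing is lost. Where it applies, your argument is cleaner; the paper's buys independence from Kapovich's theorem and from its genus restriction.

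Two gaps remain. First, the proposition is applied in the lattice constructions with a \emph{prescribed} tuple $(m_2,\ldots,m_g)$ --- the holonomies needed there have the shape $(\vol \chi, i, 1,0,\ldots,1,0,2,0,\ldots,2,0)$ --- whereas you only produce the all-ones tuple. This is a one-line fix with a tool you already have: from $(1,1,\ldots,1)$ apply, for each $i$ with $m_i=2$, your first transvection $a_i\mapsto a_i+a_2$, $b_2\mapsto b_2-b_i$. Second, and more seriously, the proposition is also invoked for $g=2$ (\cref{genre2} disposes of the lattice case by citing \cref{ameliorerlattice}), and there your argument fails twice over: Kapovich's classification is only available for $g\geqslant 3$, and for $g-1=1$ the residual action on the single coordinate $n_2$ is only $n_2\mapsto \pm n_2+kN$, which is not transitive on residues coprime to $N$ (take $N=5$, $n_2=2$, realized by $\chi=(5,i,2,0)$). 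The paper's hands-on proof, which is willing to move $\chi(b_2)$, covers $g=2$; your proof would need to be supplemented there.
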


We will give an alternative proof of the second part \cref{Kapovich} as well, based on the following known result.
\begin{lemma}\label{transitive}
The group $\mathrm{Sp}_{2g}(\Z)$ acts transitively on the vectors primitive vectors of $\Z^{2g}$.
\end{lemma}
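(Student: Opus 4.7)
The plan is to show that any primitive vector $v \in \Z^{2g}$ can be extended to a symplectic basis of $\Z^{2g}$; the change-of-basis matrix from the standard symplectic basis to such an extension then lies in $\mathrm{Sp}_{2g}(\Z)$ and sends the first standard basis vector to $v$, which gives transitivity on primitive vectors.

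First I would produce a symplectic partner for $v$: a vector $w \in \Z^{2g}$ with $\omega(v,w) = 1$, where $\omega$ denotes the standard symplectic form. The linear functional $\omega(v, \cdot) : \Z^{2g} \to \Z$ has image $d\Z$ where $d$ is the gcd of the coordinates of $v$, since evaluating on the standard basis recovers (up to signs) these coordinates. Primitivity of $v$ gives $d = 1$, so a suitable $w$ exists.

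Next I would establish the orthogonal splitting $\Z^{2g} = L \oplus L^{\perp}$, where $L = \Z v \oplus \Z w$ and the orthogonal is taken with respect to $\omega$. For $u \in \Z^{2g}$, the decomposition $u = \bigl(\omega(u,w)\,v - \omega(u,v)\,w\bigr) + u'$ puts the first summand in $L$ and forces $u' \in L^{\perp}$ by a direct computation; crucially both summands have integer coordinates because $\omega(v,w)=1$. Then I would induct on $g$: the restriction of $\omega$ to $L^{\perp}$ is a unimodular symplectic form on a free abelian group of rank $2g-2$, so $L^{\perp}$ admits a symplectic basis $(v_2, w_2, \ldots, v_g, w_g)$. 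Combined with $(v,w)$ this yields a symplectic basis of $\Z^{2g}$ whose first vector is $v$, and the induction bottoms out at $g=1$ where $(v,w)$ itself is the basis.

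The only subtle point is the integrality of the projection onto $L$, and this is precisely where $\omega(v,w) = 1$ (as opposed to merely nonzero) is used; without the unimodularity of $\omega|_L$ the projection would only land in $\tfrac{1}{\omega(v,w)}L$. Everything else is formal once the partner $w$ has been produced, so the genuine content of the lemma is the surjectivity of $\omega(v,\cdot)$ granted by primitivity.
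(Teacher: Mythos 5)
Your argument is correct. Note that the paper does not prove this lemma itself; it simply cites \cite[Proposition 6.2]{FarbMargalit}, and what you have written is the standard self-contained proof of that fact: produce a symplectic partner $w$ with $\omega(v,w)=1$ from the surjectivity of $\omega(v,\cdot)$ (which is exactly primitivity), split off the unimodular rank-$2$ sublattice $L=\Z v\oplus\Z w$ orthogonally, and induct. The only phrasing I would tighten is the inductive hypothesis: to apply it to $L^{\perp}$ you should state it as ``every free abelian group with a unimodular alternating form admits a symplectic basis'' rather than as a statement about the standard $\Z^{2g-2}$, since $L^{\perp}$ is only identified with the standard symplectic lattice \emph{after} the induction supplies a basis; with that wording the argument is complete, and your closing observation correctly isolates $\omega(v,w)=1$ as the point where primitivity is genuinely used.
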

We refer to \cite[Proposition 6.2]{FarbMargalit} for a proof of \cref{transitive}. We infer the classification of the orbits  of the $\mathrm{Sp}_{2g}(\Z)$ action on $\Z^{2g}$.

\begin{cor}
Let  $u,v\in \Z^{2g}\setminus \{0\}$. There exists $A\in \mathrm{Sp}_{2g}(\Z)$ such that $Au = v$ if and only if $\gcd(u) = \gcd(v)$.\qed
\end{cor}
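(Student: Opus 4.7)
The plan is a short two-direction argument that reduces the corollary to \cref{transitive}.

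For the forward implication, I would observe that if $A\in\mathrm{Sp}_{2g}(\Z)$ and $Au=v$, then since $A$ has integer entries, every coordinate of $v$ is an integer linear combination of the coordinates of $u$, so $\gcd(u)\mid\gcd(v)$. Because $A^{-1}\in\mathrm{Sp}_{2g}(\Z)$ as well (symplectic matrices form a group), the same reasoning applied to $A^{-1}v=u$ gives $\gcd(v)\mid\gcd(u)$, hence equality.

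For the converse, let $d=\gcd(u)=\gcd(v)$, which is nonzero since $u,v\neq 0$. Write $u=d\,u'$ and $v=d\,v'$ with $u',v'\in\Z^{2g}$ primitive (i.e. $\gcd(u')=\gcd(v')=1$). By \cref{transitive}, there exists $A\in\mathrm{Sp}_{2g}(\Z)$ with $Au'=v'$, and then by linearity $Au=dAu'=dv'=v$, finishing the proof.

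The argument is essentially immediate once \cref{transitive} is in hand; the only step worth flagging is verifying that the $\gcd$ of a nonzero integer vector is a genuine $\mathrm{GL}_{2g}(\Z)$-invariant, which follows from the two-sided integrality of elements of $\mathrm{Sp}_{2g}(\Z)\subset\mathrm{GL}_{2g}(\Z)$. There is no real obstacle; the content has already been placed in the transitivity lemma.
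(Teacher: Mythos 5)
Your proof is correct and is exactly the argument the paper intends: the corollary is stated with an immediate \qed, the point being precisely the reduction you give, namely that $\gcd$ is a $\mathrm{GL}_{2g}(\Z)$-invariant (via integrality of $A$ and $A^{-1}$) and that the primitive case is \cref{transitive} applied to $u/d$ and $v/d$. Nothing to add.
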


We now prove \cref{ameliorerlattice}. Let $\chi\in \Hom(\Gamma, \C)$ be such that $\Lambda =\chi(\Gamma)$ is a lattice and such that $\vol\chi\geqslant 2\mathrm{Area}(\C/\Lambda)$.
\begin{proof}
We may assume that $\chi(\Gamma) = \Z\oplus i\Z$, changing $\chi$ by $A\cdot\chi$ with $A\in \mathrm{GL}_2^+(\R)$ if necessary. Let $u = (\Re(\chi(a_1)), \ldots, \Re(\chi(b_g))$ and $v = (\Im(\chi(a_1)), \ldots, \Im(\chi(b_g))$. Since $\gcd(u) = 1$, there exists $A\in \mathrm{Sp}_{2g}(\Z)$ such that $Av = (0, 1, 0, \ldots, 0)$. Moreover, this does not change if we multiply $A$ by $B = \mathrm{Id}_2\oplus C$, with $C\in \mathrm{Sp}_{2g-2}(\Z)$. We can choose $C$ such that $BAu = (p, q, 0, l, 0, 0, \ldots, 0, 0)$ with $l,p,q\in \Z$. Therefore there exists $\gamma\in \Mod(S)$ such that after replacing $\chi$ with $\gamma\cdot \chi$, we have $\chi(a_1) = p$, $\chi(b_1) = q+i$, $\chi(a_i) =\chi(b_i) = 0$ for $i\geqslant 2$ unless $i=2$, where $\chi(b_i) = l$. Since $i\in \chi(\Gamma)$, there exists $\lambda,\mu\in \Z$ such that $q + \lambda p + \mu l = 0$. We apply the matrix $$ \begin{pmatrix}
1 & 0 & -\mu & 0\\
0 & 1 & 0 & 0\\
0 & 0 & 1 & 0\\
0 & \mu & 0 & 1
\end{pmatrix}\oplus \mathrm{Id}_{2g-4}$$
that replaces $(a_1, b_1, a_2, b_2)$ with $(a_1, b_1+\mu b_2, a_2- \mu a_1, b_2)$. After this, replace $(a_1, b_1)$ by $(a_1, b_1 + \lambda a_1)$ with the matrix $\begin{pmatrix}
1 & \lambda\\
0 & 1
\end{pmatrix}\oplus \mathrm{Id}_{2g-2}$. We now have $\chi = (p, i, -\mu p, l, 0, \ldots, 0)$,. Changing $(a_2, b_2)$ to $(a_2, b_2 \pm a_2)$ or to $(a_2\pm b_2, b_2)$, one can make $\max(|\chi(a_2)|, |\chi(b_2)|)$ decrease until one of $\chi(a_2)$ and $\chi(b_2)$ is $0$, since $\chi(\Gamma)$ is discrete. Therefore we may assume that $\chi(a_2) = 0$, and that $\chi(b_2) = l$ for some $l\in \Z$. We replace $(a_1, b_1, a_2, b_2)$ with $(a_1, b_1 - b_2, a_2 + a_1, b_2)$ so that $\chi(a_2) = p$. Since $\chi(\Gamma) = \Z\oplus i\Z$, we have $\gcd(p, l) = 1$. We perform the euclidean algorithm as above in the handle $(a_2, b_2):$ replace $(a_2, b_2)$ with $(a_2, b_2 \pm a_2)$ or $(a_2\pm b_2, b_2)$ so that $\max(|\chi(a_2)|, |\chi(b_2)|)$ decreases until $\chi(a_2) = 0$ or $\chi(b_2)=0$. We can assume that $\chi(a_2) = 0$, and $\chi(b_2) = 1$, since $\gcd(\chi(a_2), \chi(b_2))$ is preserved at each step. Indeed, one can change the signs of $\chi(a_2)$ and $\chi(b_2)$ if necessary by applying the matrix $\mathrm{Id}_2 \oplus -\mathrm{Id}_2 \oplus \mathrm{Id}_{2g-4}$. Now replace $(a_1, b_1, a_2, b_2)$ by $(a_1, b_1 + kb_2, a_2 - ka_1, b_2)$, where $k$ is such that we get $\chi(b_1) = i$. We finally replace $(a_2, b_2)$ with $(b_2, -a_2 - kp b_2)$, so that $\chi(a_2) = 1$ and $\chi(b_2) = 0$. Therefore we have $\chi = (p, i, 1, 0, 0,\ldots, 0)$.
For each $3\leqslant i\leqslant g$, we perform the symplectic transformations $(a_2, b_2, a_i, b_i)\rightarrow(a_2, b_2 - m_ib_i, a_i + m_i a_2, b_i)$, so that $\chi(a_i) = m_i$ and $\chi(b_i) = 0$.
\end{proof}

From now on until \cref{genre2}, we assume that $g\geqslant 3$.
We also adapt the first case of \cref{Kapovich}.
\begin{prop}\label{ameliorer}
In the first case of the above theorem, we also can assume the following. 
\begin{enumerate}
\item The parallelogram formed by the vectors $\chi'(a_1)$ and $\chi'(b_1)$ contains in its interior a translate of a rectangle $R = I\times J\subset \R^2\simeq \C$ that contains all the points $M\chi'(a_i)$ for $i\geqslant 2$, where $M > 0$ is fixed.

\item $\arg(\chi(a_i)) \neq \arg(\chi(a_j))$ for every $2\leqslant i < j\leqslant g$.
\end{enumerate}
\end{prop}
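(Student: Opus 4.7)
Start with $\chi'$ satisfying the conclusion of the first case of \cref{Kapovich}, so $\Im(\overline{\chi'(a_i)}\chi'(b_i)) > 0$ for all $i$. The strategy is to obtain (2) first via shears inside each handle, then (1) via symplectic transformations that fix every $\chi'(a_j)$ for $j \geq 2$ (so as to preserve (2)), together with an element of $\mathrm{GL}_2^+(\R)$ at the end.

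For (2), take each $i \geq 2$ in turn and perform the symplectic move $a_i \mapsto a_i + n_i b_i$. This shifts $\chi'(a_i)$ by $n_i \chi'(b_i)$ and leaves every other period invariant. Because $\Im(\overline{\chi'(a_i)}\chi'(b_i)) > 0$, the vectors $\chi'(a_i)$ and $\chi'(b_i)$ are $\R$-linearly independent, so $\arg(\chi'(a_i) + n\chi'(b_i))$ varies non-trivially with $n \in \R$; for each $j \neq i$ there is at most one real $n$ making this argument coincide with $\arg \chi'(a_j)$. Inductively picking $n_i \in \Z$ outside the finite bad set establishes (2).

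For (1), use the symplectic transvections $T_\alpha(x) = x + \omega(x,\alpha)\alpha$ with $\alpha$ in the linear span of $b_1, a_2, \ldots, a_g$: since $\omega(b_1,\alpha) = 0$ and $\omega(a_j,\alpha) = 0$ for $j \geq 2$, such $T_\alpha$ fixes $\chi'(b_1)$ and every $\chi'(a_j)$ with $j \geq 2$, while shifting $\chi'(a_1)$ by $\omega(a_1,\alpha)\alpha$. Taking $\alpha = b_1$ shifts $\chi'(a_1)$ by $\chi'(b_1)$; a direct computation shows that the composition $T_{b_1+a_j}T_{b_1}^{-1}$ shifts $\chi'(a_1)$ by $\chi'(a_j)$ for any $j \geq 2$ while still fixing $\chi'(b_1)$ and the $\chi'(a_k)$, $k \geq 2$. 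Iterating, $\chi'(a_1)$ can be translated by any $\Z$-linear combination of $\chi'(b_1),\chi'(a_2),\ldots,\chi'(a_g)$. A symmetric argument, using $\alpha$ in the span of $a_1,a_2,\ldots,a_g$, lets us translate $\chi'(b_1)$ by any $\Z$-combination of $\chi'(a_1),\chi'(a_2),\ldots,\chi'(a_g)$, always while fixing every $\chi'(a_j)$. Thanks to (2) and $g \geq 3$, the vectors $\chi'(a_2)$ and $\chi'(a_3)$ are $\R$-linearly independent, so the available shift set for $\chi'(a_1)$ is a $2$-dimensional subgroup of $\C$ and contains arbitrarily long vectors in a dense family of directions. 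Choosing such shifts to make $\chi'(a_1)$ and $\chi'(b_1)$ large and transverse, and pre-composing with a rotation in $\mathrm{GL}_2^+(\R)$ to put the rectangle sides along the axes, the parallelogram $P_1$ becomes arbitrarily large and its interior contains a translate of any fixed axis-aligned rectangle, in particular of the bounding box of the unchanged set $\{M\chi'(a_j) : j\geq 2\}$. This gives (1). The main technical point is verifying that a single transvection shifts $\chi'(a_1)$ only in a direction containing a $\chi'(b_1)$-component, and that the stated composition $T_{b_1+a_j}T_{b_1}^{-1}$ really does produce a pure $\chi'(a_j)$-shift while fixing $\chi'(b_1)$; once this is checked, the rest of the argument reduces to an elementary geometric enlargement.
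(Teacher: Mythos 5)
Your handling of condition (2) is fine: the shears $a_i\mapsto a_i+n_ib_i$ fix all other periods, preserve each $\Im(\overline{\chi'(a_i)}\chi'(b_i))$, and an affine line avoiding the origin meets each ray from the origin at most once, so a good integer $n_i$ exists. The argument for condition (1), however, has two genuine gaps. First, the quantity $\sum_i\Im(\overline{\chi'(a_i)}\chi'(b_i))=\vol\chi$ is a $\Mod(S)$-invariant, and the conclusion of the first case of \cref{Kapovich} --- which must survive, since \cref{construction} builds a torus out of each handle --- forces every summand to remain positive; hence the area of the parallelogram $P_1$, namely $\Im(\overline{\chi'(a_1)}\chi'(b_1))$, can never exceed $\vol\chi$. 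Since your moves deliberately leave the points $\chi'(a_j)$, $j\geqslant 2$, untouched, the rectangle $R$ you must fit inside $P_1$ is a fixed set, and its area (for $M$ large, or for spread-out configurations of the $\chi'(a_j)$) can exceed $\vol\chi$; applying $\mathrm{GL}_2^+(\R)$ scales both areas by the same factor, so it cannot help. ``Making $\chi'(a_1)$ and $\chi'(b_1)$ large and transverse'' is therefore simply not available: the real content of (1) is that the $\chi'(a_j)$, $j\geqslant 2$, must be made \emph{small relative to the first handle}, and your scheme never modifies them. Second, the composite $T_{b_1+a_j}T_{b_1}^{-1}$ is not the identity on $b_j$: it sends $b_j$ to $b_j-b_1-a_j$, so each application decreases $\Im(\overline{\chi'(a_j)}\chi'(b_j))$ by $\Im(\overline{\chi'(a_j)}\chi'(b_1))$, and iterating it (as you must, to move $\chi'(a_1)$ far) will in general drive the $j$-th handle volume negative; you neither verify nor repair this. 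A smaller slip: condition (2) only excludes $\chi'(a_2)$ and $\chi'(a_3)$ being positive real multiples of one another, so it does not give $\R$-linear independence (they may be antiparallel); you would need to strengthen step (2) to $\arg\chi'(a_i)\neq\arg(\pm\chi'(a_j))$.

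For comparison, the paper does not attempt any explicit symplectic manipulation here. It invokes the result of Calsamiglia, Deroin and Francaviglia \cite[Proposition 8.1]{deroin} (obtained by reworking Kapovich's Ratner-theoretic argument) that, after applying some $A\in\mathrm{GL}_2^+(\R)$, the closure $\overline{\Mod(S)\cdot\chi}$ contains \emph{every} $\chi'$ of the same volume whose imaginary part is a primitive integer vector; one then simply picks such a $\chi'$ satisfying (1), (2) and the positivity of all handle volumes --- which is easy since one has complete freedom --- and uses that all these conditions are open to pass from the orbit closure to the orbit. An elementary proof along your lines would need, at a minimum, moves that shrink the $\chi'(a_j)$ for $j\geqslant 2$ and a bookkeeping of all $2g$ periods, not only those appearing in the statement.
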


\begin{proof}
Reviewing the arguments of Kapovich in \cite{Kapovich}, Calsamiglia, Deroin and Francaviglia showed that when we are not in the second case of \cref{Kapovich}, applying $A\in \mathrm{GL}_2^+(\R)$ if necessary, the orbit closure $\overline{\Mod(S)\cdot\chi}$ contains all $\chi'\in \C^{2g}$ such that $\vol{\chi'} = \vol \chi$ and such that $\Im(\chi')$ is in $\mathbb Z^{2g}$ and is primitive (see \cite[Proposition 8.1]{deroin}). Take $\chi'\in \Hom(\Gamma, \C)$ such that $\Im(\chi')\in \mathbb Z^{2g}$ is primitive, such that $\Im(\overline{\chi'(a_i)}\chi(b_i)) > 0$ for all $i$ and such that $\chi'$ satisfies the two conditions of \cref{ameliorer}. Rescale $\Re(\chi')$ by a positive constant so that $\vol{\chi'} = \vol\chi$. Then $\chi'\in \overline{\Mod(S)\cdot \chi}$, and $\chi'$ still satisfies the required properties. Since these conditions are open, we may assume that $\chi'\in\Mod(S)\cdot\chi$.
\end{proof}

\section{Construction of translation surfaces}\label{construction}

Let us fix a partition $1\leqslant n_1\leqslant n_2 \leqslant \ldots \leqslant n_k$ of $2g-2$ and $\chi \in \Hom(\Gamma, \C)$ such that $\vol \chi > 0$ and $\vol \chi \geqslant (n_k+1) \textrm{Area}(\C / \Lambda)$ if $\chi(\Gamma) = \Lambda$ is a lattice in $\C$. We observe that both the actions of $\Mod(S)$ and $\mathrm{GL}_2^+(\R)$ on $\Hom(\Gamma, \C)$ preserve the property of being in the image by $\Per$ of a given stratum.

\begin{lemma}
Let $\gamma\in \Mod(S)$ and $A\in \mathrm{GL}_2^+(\R)$. We have:
\begin{enumerate}
\item $\chi\in \mathrm{Per}(\mathcal H(n_1, \ldots, n_k)) \iff A\cdot \chi \in\mathrm{Per}(\mathcal H(n_1, \ldots, n_k))$
\item $\chi\in \mathrm{Per}(\mathcal H(n_1, \ldots, n_k)) \iff \gamma\cdot \chi \in \mathrm{Per}(\mathcal H(n_1, \ldots, n_k))$.
\end{enumerate}
\end{lemma}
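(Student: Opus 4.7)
Both statements are biconditionals, and the actions of $\Mod(S)$ and $\mathrm{GL}_2^+(\R)$ are by bijections on $\Hom(\Gamma, \C)$, so in each case it suffices to prove one implication. In both cases I would produce an explicit abelian differential, or equivalently a translation surface, in the prescribed stratum whose period realises the transformed character.

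For the $\Mod(S)$-action, start from $\alpha \in \mathcal H(n_1, \ldots, n_k)$ with $\Per(\alpha) = \chi$, defined on a Riemann surface $(S, X)$. By Dehn--Nielsen--Baer, represent $\gamma$ (or $\gamma^{-1}$, depending on the convention for the action by precomposition) by an orientation-preserving diffeomorphism $\phi : S \to S$. Endow $S$ with $\phi^* X$ so that $\phi : (S, \phi^* X) \to (S, X)$ is a biholomorphism; then $\phi^* \alpha$ is a nonzero holomorphic 1-form on $(S, \phi^* X)$ whose zero set is $\phi^{-1}(\{\text{zeros of } \alpha\})$ with the same multiplicities, so $\phi^* \alpha \in \mathcal H(n_1, \ldots, n_k)$. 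The change of variables $\int_\eta \phi^* \alpha = \int_{\phi \circ \eta} \alpha = \chi(\phi_* \eta)$ identifies $\Per(\phi^* \alpha)$ with $\gamma \cdot \chi$.

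For the $\mathrm{GL}_2^+(\R)$-action I would work on the translation surface side of the correspondence recalled in the introduction. Given a translation atlas $\{\phi_i : U_i \to \mathbb E^2\}$ associated to $\alpha$, with cone models of total angles $2\pi(n_j+1)$ at the singularities and translations as transition maps, post-compose every chart with $A$. Because $A$ is an $\R$-linear map of $\R^2 \simeq \C$, the new transitions $A \circ (z \mapsto z + c) \circ A^{-1} = (w \mapsto w + Ac)$ are again translations, giving a genuine translation atlas. The cone structure at each singularity, being the topological gluing of $n_j + 1$ half-planes along their boundaries, is preserved by $A$, so the resulting translation surface lies in $\mathcal H(n_1, \ldots, n_k)$. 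Its developing map is $A \circ f$, so its holonomy is $A \cdot \chi$, which by the correspondence is the period of the associated abelian differential.

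The only delicate point I foresee is purely notational: aligning the convention by which $\Mod(S) \simeq \mathrm{Out}^+(\Gamma)$ acts on $\Hom(\Gamma, \C)$ with the pullback of differentials, i.e.\ choosing between $\phi$ and $\phi^{-1}$. There is no geometric obstacle, since both pullback of forms and post-composition of charts by $A$ are manifestly natural operations on translation surfaces that preserve the topological type and the orders of the zeros.
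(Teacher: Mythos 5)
Your proposal is correct and matches the paper's own argument, which likewise pulls back the translation structure by a homeomorphism representing $\gamma$ and post-composes the charts by $A$ (the paper phrases both steps on the translation-surface side, but this is the same computation via the correspondence with abelian differentials). The extra care you take with the zero multiplicities, the transition maps $w \mapsto w + Ac$, and the $\phi$ versus $\phi^{-1}$ convention is sound and only makes explicit what the paper leaves implicit.
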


\begin{proof}
Suppose $\chi$ is the holonomy of a translation surface structure. If $z$ is some local coordinate then define a new chart by $Az$. This gives a new translation surface structure with holonomy $A\cdot\chi$ and the same combinatoric of conical points. If $\gamma$ is represented by a homeomorphism $f$, then the pullback of the translation surface structure by $f$ gives a new translation structure with holonomy $\gamma\cdot \chi$.
\end{proof}

Note that the action of $\mathrm{GL}_2^+(\R)$ changes the volume, but does not change its sign, nor the ratio $\vol \chi/\textrm{Area}(\C/\Lambda)$ in the case where $\chi(\Gamma) = \Lambda$ is a lattice.

Therefore it suffices to consider the $\chi$ which have the form of \cref{ameliorer} or of \cref{ameliorerlattice} to prove \cref{resultat}. Our goal is to construct a translation surface with holonomy $\chi$ and with conical points of angles $2\pi(n_1+1), \ldots, 2\pi(n_k+1)$.
\subsection{The generic case}

In this section we suppose that $\chi$ has the form of the first case of \cref{ameliorer}.

Consider the parallelogram $P_i\subset \C$ \textit{associated} to $\chi(a_i)$ and $\chi(b_i)$ \textit{i.e.} whose vertices are $0$, $\chi(a_i)$, $\chi(b_i)$, $\chi(a_i)+\chi(b_i)$. This gives complex coordinates on the torus $T_1$ obtained by gluing the opposite sides of $P_1$ by translations. 

Before going on to the general construction, let us explain how to glue a torus to $T_1$ by adding a single conical point of angle $8\pi$. Make a slit in $P_1$, on a segment of the form $p + \chi(a_2)$. Identify the boundary components of these slits with the edges of $P_2$ corresponding to $\chi(a_2)$, see \cref{gluing}. We also identify the other edges of $P_2$ by the translation $z\mapsto z + \chi(a_2)$.
\begin{figure}[h]
    \centering    
    \def\svgwidth{\columnwidth}
	\scalebox{1}{
\begingroup%
  \makeatletter%
  \providecommand\color[2][]{%
    \errmessage{(Inkscape) Color is used for the text in Inkscape, but the package 'color.sty' is not loaded}%
    \renewcommand\color[2][]{}%
  }%
  \providecommand\transparent[1]{%
    \errmessage{(Inkscape) Transparency is used (non-zero) for the text in Inkscape, but the package 'transparent.sty' is not loaded}%
    \renewcommand\transparent[1]{}%
  }%
  \providecommand\rotatebox[2]{#2}%
  \newcommand*\fsize{\dimexpr\f@size pt\relax}%
  \newcommand*\lineheight[1]{\fontsize{\fsize}{#1\fsize}\selectfont}%
  \ifx\svgwidth\undefined%
    \setlength{\unitlength}{125.65555114bp}%
    \ifx\svgscale\undefined%
      \relax%
    \else%
      \setlength{\unitlength}{\unitlength * \real{\svgscale}}%
    \fi%
  \else%
    \setlength{\unitlength}{\svgwidth}%
  \fi%
  \global\let\svgwidth\undefined%
  \global\let\svgscale\undefined%
  \makeatother%
  \begin{picture}(1,0.33188779)%
    \lineheight{1}%
    \setlength\tabcolsep{0pt}%
    \put(0,0){\includegraphics[width=\unitlength,page=1]{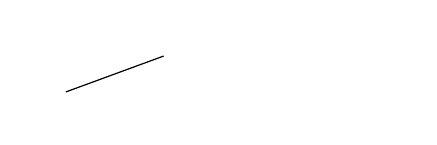}}%
    \put(0.1616516,0.09844962){\color[rgb]{0,0,0}\rotatebox{20.154313}{\makebox(0,0)[lt]{\lineheight{1.25}\smash{\begin{tabular}[t]{l}$+~+~+~+~+~+$\end{tabular}}}}}%
    \put(0.14946559,0.12874069){\color[rgb]{0,0,0}\rotatebox{20.154313}{\makebox(0,0)[lt]{\lineheight{1.25}\smash{\begin{tabular}[t]{l}$-~-~-~-~-~-$\end{tabular}}}}}%
    \put(0,0){\includegraphics[width=\unitlength,page=2]{imbis.pdf}}%
    \put(0.6096903,0.22167659){\color[rgb]{0,0,0}\rotatebox{20.154313}{\makebox(0,0)[lt]{\lineheight{1.25}\smash{\begin{tabular}[t]{l}$-~-~-~-~-~-$\end{tabular}}}}}%
    \put(0,0){\includegraphics[width=\unitlength,page=3]{imbis.pdf}}%
    \put(0.68739453,0.09828696){\color[rgb]{0,0,0}\rotatebox{20.154313}{\makebox(0,0)[lt]{\lineheight{1.25}\smash{\begin{tabular}[t]{l}$+~+~+~+~+~+$\end{tabular}}}}}%
    \put(0,0){\includegraphics[width=\unitlength,page=4]{imbis.pdf}}%
  \end{picture}%
\endgroup%
}
	\caption{Gluing a handle.}
    \label{gluing}
\end{figure}

The two end points of the slit on $P_1$ are identified, and yield a conical point of angle $8\pi$ on the resulting genus $2$ surface. Observe that its holonomy is given by the vector $(\chi(a_1), \chi(b_1), \chi(a_2), \chi(b_2))$, see \cref{topres}.

\begin{figure}[h]
    \centering    
    \def\svgwidth{\columnwidth}
	\scalebox{0.7}{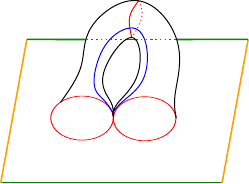}
    \caption{Topological result.}
    \label{topres}
\end{figure}

Let us return to the general case. We want to have conical singularities corresponding to $n_1, \ldots, n_k$. For each $1\leqslant i\leqslant k$, we glue $l_i$ tori as above on a point $p_i\in P$, where $2l_i = n_i$ or $2l_i = n_i-1$, see the black starfish formed by the slits meeting at $p_i$ in \cref{anse}. We make sure that these starfish do not intersect \textit{i.e.} that the slits do not overlap outside of the points $p_i$.

\begin{figure}[h]
    \centering    
    \def\svgwidth{\columnwidth}
	\scalebox{1}{
\begingroup%
  \makeatletter%
  \providecommand\color[2][]{%
    \errmessage{(Inkscape) Color is used for the text in Inkscape, but the package 'color.sty' is not loaded}%
    \renewcommand\color[2][]{}%
  }%
  \providecommand\transparent[1]{%
    \errmessage{(Inkscape) Transparency is used (non-zero) for the text in Inkscape, but the package 'transparent.sty' is not loaded}%
    \renewcommand\transparent[1]{}%
  }%
  \providecommand\rotatebox[2]{#2}%
  \newcommand*\fsize{\dimexpr\f@size pt\relax}%
  \newcommand*\lineheight[1]{\fontsize{\fsize}{#1\fsize}\selectfont}%
  \ifx\svgwidth\undefined%
    \setlength{\unitlength}{112.85757569bp}%
    \ifx\svgscale\undefined%
      \relax%
    \else%
      \setlength{\unitlength}{\unitlength * \real{\svgscale}}%
    \fi%
  \else%
    \setlength{\unitlength}{\svgwidth}%
  \fi%
  \global\let\svgwidth\undefined%
  \global\let\svgscale\undefined%
  \makeatother%
  \begin{picture}(1,0.41210103)%
    \lineheight{1}%
    \setlength\tabcolsep{0pt}%
    \put(0,0){\includegraphics[width=\unitlength,page=1]{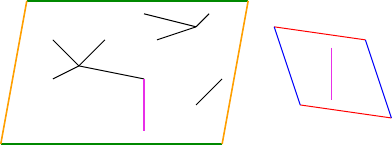}}%
    \put(0.34478626,0.19071079){\color[rgb]{0,0,0}\rotatebox{-89.299634}{\makebox(0,0)[lt]{\lineheight{1.25}\smash{\begin{tabular}[t]{l}$+~+~+$\end{tabular}}}}}%
    \put(0.37557882,0.20263167){\color[rgb]{0,0,0}\rotatebox{-89.299634}{\makebox(0,0)[lt]{\lineheight{1.25}\smash{\begin{tabular}[t]{l}$-~-~-$\end{tabular}}}}}%
    \put(0.81912251,0.28040686){\color[rgb]{0,0,0}\rotatebox{-89.299634}{\makebox(0,0)[lt]{\lineheight{1.25}\smash{\begin{tabular}[t]{l}$-~-~-$\end{tabular}}}}}%
    \put(0.8566281,0.27772504){\color[rgb]{0,0,0}\rotatebox{-89.299634}{\makebox(0,0)[lt]{\lineheight{1.25}\smash{\begin{tabular}[t]{l}$+~+~+$\end{tabular}}}}}%
    \put(0,0){\includegraphics[width=\unitlength,page=2]{gengen.pdf}}%
  \end{picture}%
\endgroup%
}
    \caption{Construction of $\alpha\in \mathcal{H}(5, 6, 9)$.}
    \label{anse}
\end{figure}We now explain how to introduce odd conical points by gluing tori. We can glue the torus obtained by identifying the opposite sides of $P_i$ by translations to $T_1$, by making a slit in both of them and gluing the boundary components as indicated in purple in \cref{anse}. This adds a $2\pi$ angle at each end of the slit, and allows us to have the desired conical angles. We may assume that the starfish are sufficently close to make these slits, translating them if necessary.
By construction, the resulting translation surface has holonomy $\chi$, and the corresponding abelian differential is in $\mathcal H(n_1, \ldots, n_k)$.
Note that we have room in $P_1$ to make these gluings if we take $M$ sufficently large in \cref{ameliorer}.

\subsection{The lattice case}

We now suppose that $\chi$ has the form of \cref{ameliorerlattice}. Note that $\vol \chi = \chi(a_1)$.

\subsubsection{Multiple singularities}

We assume here that $k\geqslant 2$. Let us first explain how to build a translation surface with corresponding abelian differential in $\mathcal H(g-1, g-1)$. Consider the parallelogram $P$ in $\C$ associated to $\chi(a_1)$ and $\chi(b_1)$.
Consider a segment on $P$ such that its translates by $z\mapsto z + k$, with $1\leqslant k\leqslant g-1$ are also contained in $P$. Cutting $P$ along those segments yields $2g$ boundaries $\delta_1^+$, $\delta_1^-, \ldots, \delta_{g}^+, \delta_{g}^-$. Identify $\delta_i^+$ with $\delta_{i+1}^-$ for all $1 \leqslant i\leqslant g$, in cyclic notation, as in \cref{hgg}.
\begin{figure}[h]
    \centering    
    \def\svgwidth{\columnwidth}
	\scalebox{0.7}{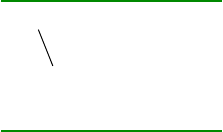}
    \caption{Construction of $\alpha\in \mathcal H(2, 2)$.}
    \label{hgg}
\end{figure}

Identifying the edges of $P$ by translations, we get a flat surface with two conical points of angle $2g\pi$. The holonomy of this surface is $(\vol \chi, i, 1, 0,\ldots, 1, 0)$, see \cref{toptopres},  and is in $\Mod(S)\cdot\chi$ by \cref{ameliorerlattice}.
Note that is was possible to put these segments in $P$ because of the assumption that $\vol \chi > g-1$.
\begin{figure}[h]
    \centering    
    \def\svgwidth{\columnwidth}
	\scalebox{0.7}{
\begingroup%
  \makeatletter%
  \providecommand\color[2][]{%
    \errmessage{(Inkscape) Color is used for the text in Inkscape, but the package 'color.sty' is not loaded}%
    \renewcommand\color[2][]{}%
  }%
  \providecommand\transparent[1]{%
    \errmessage{(Inkscape) Transparency is used (non-zero) for the text in Inkscape, but the package 'transparent.sty' is not loaded}%
    \renewcommand\transparent[1]{}%
  }%
  \providecommand\rotatebox[2]{#2}%
  \newcommand*\fsize{\dimexpr\f@size pt\relax}%
  \newcommand*\lineheight[1]{\fontsize{\fsize}{#1\fsize}\selectfont}%
  \ifx\svgwidth\undefined%
    \setlength{\unitlength}{64.18243561bp}%
    \ifx\svgscale\undefined%
      \relax%
    \else%
      \setlength{\unitlength}{\unitlength * \real{\svgscale}}%
    \fi%
  \else%
    \setlength{\unitlength}{\svgwidth}%
  \fi%
  \global\let\svgwidth\undefined%
  \global\let\svgscale\undefined%
  \makeatother%
  \begin{picture}(1,0.72634974)%
    \lineheight{1}%
    \setlength\tabcolsep{0pt}%
    \put(0,0){\includegraphics[width=\unitlength,page=1]{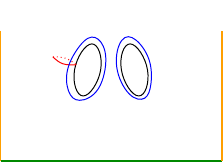}}%
    \put(0.24438101,0.43234721){\color[rgb]{0,0,0}\makebox(0,0)[lt]{\lineheight{1.25}\smash{\begin{tabular}[t]{l}$b_2$\end{tabular}}}}%
    \put(0.34245522,0.26019572){\color[rgb]{0,0,0}\makebox(0,0)[lt]{\lineheight{1.25}\smash{\begin{tabular}[t]{l}$a_2$\end{tabular}}}}%
    \put(0.64815472,0.27167242){\color[rgb]{0,0,0}\makebox(0,0)[lt]{\lineheight{1.25}\smash{\begin{tabular}[t]{l}$a_3$\end{tabular}}}}%
    \put(0.56886064,0.60710712){\color[rgb]{0,0,0}\makebox(0,0)[lt]{\lineheight{1.25}\smash{\begin{tabular}[t]{l}$b_3$\end{tabular}}}}%
    \put(0,0){\includegraphics[width=\unitlength,page=2]{toptopres.pdf}}%
  \end{picture}%
\endgroup%
}
    \caption{Topological result.}
    \label{toptopres}
\end{figure}

Let us return to the general case. We start as before, by adding $n_1$ handles to the parallelogram $P$ associated to $\chi(a_1)$, $\chi(b_1)$ and thus constructing a flat surface of genus $n_1+1$, with two conical points of angle $2\pi(n_1+1)$. We then make $n_2 - n_1$ other parallel slits below, such that one of them touches one end of the rightmost preceding slit, that we glue in a cyclic way (see the top of \cref{general}). In doing so, we obtain a flat surface with conical angles $2\pi(n_1+1)$ and $2\pi(n_2 + 1)$ and $2\pi(n_2-n_1+1)$. Iterating this process, we get a flat surface with angles $2\pi(n_1+1), \ldots, 2\pi(n_{k-1}+1)$ and $2\pi(l+ 1)$, with $l\leqslant n_k$. If needed, make a slit of length one joining the rightmost two ends of the last slits as in the bottom of \cref{general}, and some other slits that are translations by multiples of $2$ of this one, that we glue in a cyclic way to adjust the last conical angle.

\begin{figure}[h]
    \centering    
    \def\svgwidth{\columnwidth}
	\scalebox{1}{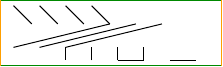}
    \caption{Construction of $\alpha\in \mathcal H(3, 5, 5, 5)$.}
    \label{general}
\end{figure}

Each horizontal line has length at most $n_k$, hence we have room to make the slits, since $\vol \chi = \chi(a_1) \geqslant\ n_k+1$. We check that the holonomy of the resulting flat surface is given by $(\vol\chi, i, 1, 0, \ldots, 1, 0, 2, 0, \ldots, 2, 0)$ and thus is in $\Mod(S)\cdot \chi$ by \cref{ameliorerlattice}.

\subsubsection{The minimal stratum}

We now suppose that $k=1$. Thanks to \cref{ameliorerlattice} we can assume that $\chi(a_2) = 1$ and that $\chi(a_i) = 2$ for $i \geqslant 3$. We start as before by considering the parallelogram $P$ associated to $\chi(a_1)$ and $\chi(b_1)$ in $\C$, that gives complex charts on the associated torus.
We make a horizontal slit of length one in $P$. We make other slits along segments which are translates of the first one by $n = 1, 3, 5, 7\ldots, 2g-3$, see \cref{min}. We identify the boundaries in a cyclic way as before.

\begin{figure}[h]
    \centering    
    \def\svgwidth{\columnwidth}
	\scalebox{1}{
\begingroup%
  \makeatletter%
  \providecommand\color[2][]{%
    \errmessage{(Inkscape) Color is used for the text in Inkscape, but the package 'color.sty' is not loaded}%
    \renewcommand\color[2][]{}%
  }%
  \providecommand\transparent[1]{%
    \errmessage{(Inkscape) Transparency is used (non-zero) for the text in Inkscape, but the package 'transparent.sty' is not loaded}%
    \renewcommand\transparent[1]{}%
  }%
  \providecommand\rotatebox[2]{#2}%
  \newcommand*\fsize{\dimexpr\f@size pt\relax}%
  \newcommand*\lineheight[1]{\fontsize{\fsize}{#1\fsize}\selectfont}%
  \ifx\svgwidth\undefined%
    \setlength{\unitlength}{65.71245471bp}%
    \ifx\svgscale\undefined%
      \relax%
    \else%
      \setlength{\unitlength}{\unitlength * \real{\svgscale}}%
    \fi%
  \else%
    \setlength{\unitlength}{\svgwidth}%
  \fi%
  \global\let\svgwidth\undefined%
  \global\let\svgscale\undefined%
  \makeatother%
  \begin{picture}(1,0.28964778)%
    \lineheight{1}%
    \setlength\tabcolsep{0pt}%
    \put(0,0){\includegraphics[width=\unitlength,page=1]{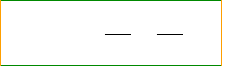}}%
    \put(0.15064641,0.15113523){\color[rgb]{0,0,0}\makebox(0,0)[lt]{\lineheight{1.25}\smash{\begin{tabular}[t]{l}$+~+~~-~-$\end{tabular}}}}%
    \put(0.13904119,0.10934204){\color[rgb]{0,0,0}\makebox(0,0)[lt]{\lineheight{1.25}\smash{\begin{tabular}[t]{l}$-~-~~=~=$\end{tabular}}}}%
    \put(0.47851984,0.15139227){\color[rgb]{0,0,0}\makebox(0,0)[lt]{\lineheight{1.25}\smash{\begin{tabular}[t]{l}$=~=$\end{tabular}}}}%
    \put(0.48737319,0.11115593){\color[rgb]{0,0,0}\makebox(0,0)[lt]{\lineheight{1.25}\smash{\begin{tabular}[t]{l}$*~~~*$\end{tabular}}}}%
    \put(0.71368511,0.15666807){\color[rgb]{0,0,0}\makebox(0,0)[lt]{\lineheight{1.25}\smash{\begin{tabular}[t]{l}$*~~~*$\end{tabular}}}}%
    \put(0.70996641,0.10963857){\color[rgb]{0,0,0}\makebox(0,0)[lt]{\lineheight{1.25}\smash{\begin{tabular}[t]{l}$+~~+$\end{tabular}}}}%
    \put(0,0){\includegraphics[width=\unitlength,page=2]{min.pdf}}%
  \end{picture}%
\endgroup%
}
    \caption{Construction of $\alpha\in \mathcal H(6)$.}
    \label{min}
\end{figure}

We obtain a flat surface, with a single conical point of angle $2\pi(2g-1)$.
The total length of this construction is $2g-2$. Therefore, we have room to make it in $P$, since $\vol \chi \geqslant 2g-1$. We thus get a genus $g$ translation surface with holonomy $\chi$ and a single conical point.

\section{The genus 2 case}\label{genre2}
The aim of this section is to prove \cref{resultat} when $g=2$.
Let $\chi = (a_1, b_1, a_2, b_2)\in \C^4\simeq \Hom(\Gamma, \C)$ be such that $\vol \chi > 0$. We assume that $\chi(\Gamma)$ is not a lattice in $\C$, since otherwise the above constructions, together with \cref{ameliorerlattice}, show that \cref{resultat} holds.

Let us observe that by definition, $\vol\chi = \det(a_1, b_1) + \det(a_2, b_2)$. We may assume that $\det(a_1, b_1) \geqslant \det(a_2, b_2)$, applying $[f]\in \Mod(S)$ that interchanges the two handles if necessary. Since $\vol \chi > 0$, we have $\det(a_1, b_1) > 0$. Let us show that it suffices to have $\det(a_1, b_1)$ and $\det(a_2, b_2)$ positive, with the ratio $\det(a_2, b_2)/\det(a_1, b_1)$ small enough, to prove \cref{resultat}.

\begin{lemma}\label{SL2}
If $v_1$ and $v_2$ generate a lattice in $\C$, then there exists $A\in \mathrm{SL}_2(\mathbb Z)$ such that $(v_1', v'_2) = A(v_1, v_2)$ satisfies $\|v_1'\|\leqslant C\sqrt{|\det(v_1, v_2)|}$ where $C = \sqrt{\frac{2}{\sqrt{3}}}$.
\end{lemma}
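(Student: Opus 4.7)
The plan is to run the two-dimensional Gauss--Lagrange basis reduction on $(v_1,v_2)$ and read off the bound from the reduced basis. Given the basis, I would repeatedly apply two kinds of moves, both lying in $\mathrm{SL}_2(\Z)$: first replace $v_2$ by $v_2 - k v_1$ for the unique $k \in \Z$ making $|\langle v_1', v_2'\rangle| \leqslant \tfrac12\|v_1\|^2$, and then, if $\|v_2\| < \|v_1\|$, swap them with a sign adjustment, e.g. $(v_1,v_2)\mapsto (v_2,-v_1)$, so that the determinant stays $+1$. Since the squared norm of the first vector takes values in the discrete set $\|L\setminus\{0\}\|^2$ and strictly decreases at each nontrivial swap, the procedure terminates, and the composition of all these moves is the desired matrix $A \in \mathrm{SL}_2(\Z)$.

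At termination the pair $(v_1',v_2')=A(v_1,v_2)$ satisfies both $\|v_1'\|\leqslant \|v_2'\|$ and $|\langle v_1',v_2'\rangle|\leqslant \tfrac12\|v_1'\|^2$. Using the elementary identity
\[
|\det(v_1,v_2)|^2 = |\det(v_1',v_2')|^2 = \|v_1'\|^2\|v_2'\|^2 - \langle v_1', v_2'\rangle^2,
\]
I would bound the right-hand side below by
\[
\|v_1'\|^2\left(\|v_1'\|^2 - \tfrac14\|v_1'\|^2\right) = \tfrac{3}{4}\|v_1'\|^4,
\]
so that $\|v_1'\|^2 \leqslant \tfrac{2}{\sqrt 3}|\det(v_1,v_2)|$, which is exactly the claim with the stated constant $C=\sqrt{2/\sqrt 3}$.

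There is no real obstacle here: the only point that demands a small amount of care is making sure that every elementary move lies in $\mathrm{SL}_2(\Z)$ rather than $\mathrm{GL}_2(\Z)$, which is handled by folding a sign into the swap as above. The statement is essentially the optimal Hermite bound in dimension two, and the extremal case is realised by the hexagonal lattice, which explains the appearance of $\sqrt{3}/2$ in the final inequality.
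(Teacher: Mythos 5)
Your proof is correct. It proves the same fact as the paper --- the two-dimensional Hermite bound --- but by a different route: you run the Gauss--Lagrange reduction algorithm explicitly (size-reduce so that $|\langle v_1',v_2'\rangle|\leqslant\tfrac12\|v_1'\|^2$, swap while $\|v_2'\|<\|v_1'\|$, terminate by discreteness of the lattice) and then extract the bound from the Gram identity $|\det(v_1',v_2')|^2=\|v_1'\|^2\|v_2'\|^2-\langle v_1',v_2'\rangle^2\geqslant\tfrac34\|v_1'\|^4$. The paper instead normalizes by the $\C^*$-action so that one vector is $1$, invokes the standard fundamental domain $\{|z|>1,\ |\Re z|\leqslant 1/2\}$ for $\mathrm{SL}_2(\Z)$ acting on $\mathbb H^2$ (citing Serre), deduces that the angle between the two vectors can be taken in $[\pi/3,2\pi/3]$, and concludes from $\|v_1\|\|v_2\|\sin\theta=|\det(v_1,v_2)|$ with $\sin\theta\geqslant\sqrt3/2$. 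The two arguments are equivalent at bottom (the fundamental-domain conditions are exactly your reduced-basis conditions), but yours is self-contained --- it replaces the citation of the fundamental domain by an explicit terminating algorithm --- while the paper's is shorter at the cost of quoting a known result. Both correctly handle the $\mathrm{SL}_2$ versus $\mathrm{GL}_2$ issue by folding a sign into the swap $(v_1,v_2)\mapsto(v_2,-v_1)$. The only cosmetic blemish in your write-up is the word ``unique'' for the reducing integer $k$, which can fail on the boundary case where $\langle v_1,v_2\rangle/\|v_1\|^2$ is a half-integer; either choice works, so nothing is lost.
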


\begin{proof} We may assume that $\det(v_1, v_2) < 0$, replacing $v_1$ with $-v_1$ if necessary.
It suffices to show that after applying $A\in \mathrm{SL}_2(\mathbb Z)$, we can assume that the angle between $v_1$ and $v_2$ is $\theta\in [\frac{\pi}{3}, \frac{2\pi}{3}]$, since then $$\frac{\sqrt 3}{2}(\min(\|v_1\|, \|v_2\|)^2\leqslant\|v_1\| \|v_2\| \sin \theta=|\det(v_1, v_2)|.$$
Applying $(v_1, v_2)\rightarrow (v_2, -v_1)$ if necessary, we have the desired inequality.

Since the action of $\C^*$ on the pairs $(v_1, v_2)$ preserves the angles between the two vectors, we are reduced to the study of the $\mathrm{SL}_2(\mathbb Z)$ action on $\mathbb H^2$ by renormalizing $(v_1, v_2)$ so that $v_2 = 1$. It is well known that $D = \{z\in \mathbb H^2 \mid |z|> 1,-1/2 \leqslant\Re(z)\leqslant 1/2\}$ is a fundamental domain for this action, see for example \cite[Chapter 7]{Serre}. Therefore, we may assume that the angle between $v_1$ and $v_2$ is in $[\frac \pi 3, \frac{2\pi}{3}]$.
\end{proof}

\begin{prop}\label{ratiogagnant}
If $0 < \det(a_2, b_2)< \epsilon \det(a_1, b_1)$, then $\chi\in \Per(\mathcal H(2))$ and $\chi\in\Per (\mathcal H(1,1))$, where $\epsilon = \frac{1}{C^2}$.
\end{prop}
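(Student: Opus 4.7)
The plan is to use the commuting $\mathrm{GL}_2^+(\R)$ and mapping class group actions to normalise $\chi$ so that the parallelogram $P_1$ associated to $(a_1,b_1)$ is an axis-aligned square and $a_2$ is strictly shorter than its side, and then apply the gluings of \cref{construction} directly. Specifically, I would apply $A\in\mathrm{SL}_2^+(\R)$ to $\chi$ so that $Aa_1=(L,0)$ and $Ab_1=(0,L)$ with $L=\sqrt{\det(a_1,b_1)}$; since $\det A=1$, the determinant $\det(a_2,b_2)$ is preserved. Next, I would apply a mapping class element acting on $(a_2,b_2)$ as some $B\in\mathrm{SL}_2(\Z)$ and fixing $(a_1,b_1)$ --- such elements exist because $\mathrm{SL}_2(\Z)$ embeds into $\mathrm{Sp}_4(\Z)$ through its action on the second handle, and $\Mod(S)\twoheadrightarrow\mathrm{Sp}_4(\Z)$. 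By \cref{SL2}, $B$ can be chosen so that $\|a_2\|\leq C\sqrt{\det(a_2,b_2)}$. Combined with the hypothesis $\det(a_2,b_2)<\det(a_1,b_1)/C^2$, this yields $\|a_2\|<L$.

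The square $P_1=[0,L]^2$ now contains, in its interior, a translate of a segment of any length strictly less than $L$ in any direction. For $\mathcal{H}(2)$, I cut such a slit of direction $a_2$ and length $\|a_2\|$ in the interior of $P_1$ and glue in the parallelogram $P_2$ with sides $a_2,b_2$ as in the single-torus construction of \cref{gluing}; this gives a translation surface of genus $2$ with a single cone point of total angle $6\pi$ and holonomy $\chi$. For $\mathcal{H}(1,1)$, I pick $\ell$ with $0<\ell<\min(\|a_2\|,L)$, make slits of direction $a_2$ and length $\ell$ in the interiors of both $P_1$ and $P_2$, and perform the purple gluing of \cref{anse}; the result is a translation surface of genus $2$ with two cone points of angle $4\pi$ and holonomy $\chi$. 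By the invariance lemma at the beginning of \cref{construction}, it follows that $\chi\in\Per(\mathcal{H}(2))\cap\Per(\mathcal{H}(1,1))$.

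The only geometric observation needed is the elementary fact that a square of side $L$ contains in its interior segments of any direction and of length strictly less than $L$, and the construction step is essentially a direct reuse of \cref{construction}. The substantive point of the proof is really arithmetic: the value $\epsilon=1/C^2$ is calibrated exactly so that $C\sqrt{\epsilon\det(a_1,b_1)}=L$, making the bound from \cref{SL2} precisely what is needed to fit $a_2$ into $P_1$.
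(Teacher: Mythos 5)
Your proof is correct and takes essentially the same route as the paper's: normalise the first handle to a square using the $\mathrm{GL}_2^+(\R)$ action, shorten $a_2$ by applying \cref{SL2} to the second handle via a mapping class acting as $\mathrm{Id}_2\oplus B$, and then perform the slit gluings of \cref{gluing} and \cref{anse}. The only cosmetic difference is that the paper rescales so that $(a_1,b_1)=(1,i)$ and tracks the ratio of determinants, whereas you keep $\det A=1$ and work with a square of side $L=\sqrt{\det(a_1,b_1)}$.
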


\begin{proof}
Applying $A\in \mathrm{GL}_2^+(\R)$ does not change the ratio $\det(a_2, b_2)/\det(a_1, b_1)$, hence we may assume that $(a_1, b_1) = (1, i)$. We thus have $\det(a_2, b_2) < \epsilon$, and it follows from \cref{SL2} that we can assume that $\|a_2\| < C\sqrt{\epsilon} \leqslant 1$. Now we can glue the parallelograms associated to $a_1, b_1$ and $a_2, b_2$ as in \cref{gluing} since we have room to make the slit. Hence $\chi\in \Per(\mathcal H(2))$. We can also glue these parallelograms with a slit as shown in purple in \cref{anse}, hence $\chi\in \Per(\mathcal H(1, 1)).$
\end{proof}

It is now enough to prove the following proposition.
\begin{prop}\label{proposition}
There exists $A\in \mathrm{GL}_2^+(\R)$ and $\gamma\in \Mod(S)$ such that $\chi' = A\cdot(\gamma\cdot \chi)$ satisfies the condition of \cref{ratiogagnant}.
\end{prop}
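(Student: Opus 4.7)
Since $\vol\chi > 0$ and $\chi(\Gamma)$ is not a lattice, $\chi(\Gamma) \subset \C$ cannot lie in a real line (that would give $\vol\chi = 0$) nor be a discrete rank-$2$ subgroup, so $\chi(\Gamma)$ is dense in $\C$. The action of $\mathrm{GL}_2^+(\R)$ preserves both the sign of each $\det(\chi(a_i),\chi(b_i))$ and their ratio, so I may normalize $(\chi(a_1),\chi(b_1)) = (1,i)$ by applying an appropriate $A$. The remaining task is to find $\gamma \in \Mod(S)$, acting via $\mathrm{Sp}_4(\Z)$ on $\Gamma^{\mathrm{ab}} = \Z^4$, producing a new symplectic basis in which the second handle has small positive symplectic area.

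Given $\delta > 0$, I aim to construct a primitive symplectic pair $(\alpha,\beta) \in \Z^4 \times \Z^4$, i.e.\ with $\langle \alpha,\beta \rangle = 1$, such that $|\chi(\alpha)| < \delta$, $|\chi(\beta)|$ is bounded by a constant depending only on $\chi$, and $\det(\chi(\alpha),\chi(\beta)) > 0$. By density of $\chi(\Gamma)$ in $\C$, and passing to a primitive element by dividing by the gcd of coordinates, there is a primitive $\alpha \in \Z^4$ with $|\chi(\alpha)| < \delta$; I will also arrange $\alpha \notin (\ker\chi)^\perp$, which forces $\alpha^\perp \cap \ker\chi = 0$, so that $\chi(\alpha^\perp)$ has $\Z$-rank $3$ in $\C$ and is therefore dense. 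Choosing any $\beta_0 \in \Z^4$ with $\langle \alpha,\beta_0 \rangle = 1$ (which exists by primitivity of $\alpha$), the coset $\chi(\beta_0) + \chi(\alpha^\perp)$ is dense in $\C$, so I pick $\beta \in \beta_0 + \alpha^\perp$ with $\chi(\beta)$ in a small neighborhood of $i$, ensuring $\det(\chi(\alpha),\chi(\beta)) > 0$ and that $|\chi(\beta)|$ is bounded.

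Given $(\alpha,\beta)$, I extend it to a symplectic basis $(e_1',f_1',\alpha,\beta)$ of $\Z^4$ (a standard fact for symplectic $\Z$-modules) and use the surjection $\Mod(S) \twoheadrightarrow \mathrm{Sp}_4(\Z)$ from Dehn--Nielsen--Baer to find $\gamma \in \Mod(S)$ whose action on $\Gamma^{\mathrm{ab}}$ sends $(a_1,b_1,a_2,b_2)$ to $(e_1',f_1',\alpha,\beta)$. Then $\gamma \cdot \chi$ has second handle $(\chi(\alpha),\chi(\beta))$ with determinant in $(0, |\chi(\alpha)| \cdot |\chi(\beta)|) \subset (0, c\delta)$ for some constant $c = c(\chi)$, and first handle of determinant $\vol\chi - \det(\chi(\alpha),\chi(\beta))$, close to $\vol\chi > 0$. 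Choosing $\delta$ small enough yields the hypotheses of \cref{ratiogagnant}.

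\textbf{Main obstacle.} The delicate step is arranging $\alpha \notin (\ker\chi)^\perp$ while keeping $|\chi(\alpha)|$ small. If $\chi$ is injective on $\Z^4$ the condition is vacuous. Otherwise $\ker\chi = \Z k_0$ is cyclic, and density of $\chi(\Z^4)$ together with the rank count forces $\chi((\ker\chi)^\perp)$ to be a lattice in $\C$ (the alternative, density in a real line, would make $\chi(\Z^4)$ a union of discretely separated parallel lines, hence not dense in $\C$). In that situation, any sufficiently small nonzero element of $\chi(\Z^4)$, specifically one shorter than the shortest vector of this lattice, automatically comes from an $\alpha \notin (\ker\chi)^\perp$, and such small elements exist by density of $\chi(\Z^4)$.
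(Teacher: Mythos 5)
Your argument breaks at the first sentence: it is not true that a non-lattice subgroup $\chi(\Gamma)\subset\C$ of positive volume must be dense. The closed subgroups of $\R^2$ also include those of the form $\R v\oplus\Z w$, and this intermediate case genuinely occurs under the standing hypotheses: take $\chi=(1,i,\sqrt2,0)$, whose image $(\Z+\sqrt2\Z)+i\Z$ has closure $\R+i\Z$ --- not a lattice, not contained in a real line, and not dense. The same error recurs when you assert that the rank-$3$ subgroup $\chi(\alpha^\perp)$ of $\C$ ``is therefore dense'': the group $\Z+\sqrt2\Z+i\Z$ has rank $3$ and is not dense. Since every subsequent step (finding $\alpha$ with $|\chi(\alpha)|<\delta$, and above all finding $\beta$ in the coset $\beta_0+\alpha^\perp$ with $\chi(\beta)$ near $i$) is powered by density, the proof collapses precisely on the non-dense case. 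This is not a removable technicality: the situation where one real projection of the period group is discrete is exactly where the paper spends most of its effort (the Euclidean-algorithm reduction of \cref{euclide}, the halving step of \cref{diviser}, and the degenerate case of \cref{zero}). Your density argument corresponds, in spirit, only to the paper's first and easiest lemma, where a dense real projection of the second handle lets one dial $\det(a_1',b_1')$ directly into the window $\left(\frac{\vol\chi}{1+\epsilon},\vol\chi\right)$.

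A secondary issue, present even where density does hold: choosing $\chi(\beta)$ near $i$ does not by itself give $\det(\chi(\alpha),\chi(\beta))>0$. Writing $\chi(\alpha)=\delta e^{i\theta}$, one has $\det(\chi(\alpha),i)=\Im(\overline{\chi(\alpha)}\,i)=\delta\cos\theta$, which is zero or negative when $\chi(\alpha)$ points near or past the imaginary axis; you should aim $\chi(\beta)$ at $i\chi(\alpha)/|\chi(\alpha)|$ instead, which is still of modulus $1$ and keeps $|\chi(\beta)|$ bounded. That is easily repaired. The density gap is not: closing it would force you to treat the $\R v\oplus\Z w$ case by hand, which essentially reproduces the paper's case analysis.
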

We may assume that $\det(a_1,b_1) > 0$ and, applying $A\in \mathrm{GL}_2^+(\R)$, that $a_1 = 1$ and $b_1 = i$.
\begin{lemma}
If the projection of the group generated by $a_2$ and $b_2$ on the real line is dense, then \cref{proposition} holds. 
\end{lemma}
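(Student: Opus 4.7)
The plan is to exhibit $\gamma=\gamma_2\gamma_1\in\Mod(S)$ and $A\in\mathrm{GL}_2^+(\R)$ with $\chi'=A\cdot(\gamma\cdot\chi)$ satisfying the hypothesis of \cref{ratiogagnant}. First apply $A$ to normalise $a_1=1,\ b_1=i$, so $\det(a_1,b_1)=1$, and fix a small parameter $\delta>0$ to be chosen at the end. The strategy is then: use the density hypothesis to make $\Re(a_2)$ arbitrarily small and positive, and then transfer area between the two handles by an amount that is an integer multiple of $\Re(a_2)$, pushing $\det(a_2,b_2)$ into the tiny positive interval $(0,\delta]$.

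\textbf{Shrinking $\Re(a_2)$.} By the density hypothesis the group $\Re(a_2)\Z+\Re(b_2)\Z$ is dense in $\R$, so there are coprime integers $p,q$ with $p\Re(a_2)+q\Re(b_2)\in(0,\delta)$ (take any element of the group in $(0,\delta)$ and divide by its $\gcd$). Complete $(p,q)$ to a matrix of $\SL{\Z}$ acting on the second handle; by Dehn--Nielsen--Baer this is realised by some $\gamma_1\in\Mod(S)$ which fixes $a_1,b_1$, preserves $\det(a_2,b_2)$, and replaces $a_2$ by $pa_2+qb_2$. Relabel so that $0<\Re(a_2)<\delta$.

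\textbf{Transferring area.} Let $v=b_1+a_2\in\Z^4$; it is primitive, so there is $\gamma_2\in\Mod(S)$ a Dehn twist along a simple closed curve of homology class $v$. Its action on $H_1$ is the symplectic transvection fixing $b_1,a_2$ and sending $a_1\mapsto a_1-(b_1+a_2)$, $b_2\mapsto b_2+(b_1+a_2)$; the $n$-th iterate has the same form with $n$ in place of $1$. Using $\Im(\overline{a_2}\,i)=\Re(a_2)$, a direct computation gives
\[
\det(a_2,b_2)^{\mathrm{new}}=\det(a_2,b_2)+n\Re(a_2),\qquad \det(a_1,b_1)^{\mathrm{new}}=1-n\Re(a_2),
\]
consistent with the preservation of $\vol\chi$. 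Pick the unique $n\in\Z$ making $\det(a_2,b_2)^{\mathrm{new}}\in(0,\Re(a_2)]\subset(0,\delta]$, available because $\Re(a_2)>0$.

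\textbf{Conclusion and main obstacle.} Volume preservation gives $\det(a_1,b_1)^{\mathrm{new}}>\vol\chi-\delta$, so
\[
\frac{\det(a_2,b_2)^{\mathrm{new}}}{\det(a_1,b_1)^{\mathrm{new}}}<\frac{\delta}{\vol\chi-\delta};
\]
taking $\delta<\epsilon\vol\chi/(1+\epsilon)$ with $\epsilon=1/C^2$ forces this ratio below $\epsilon$, and \cref{ratiogagnant} applies to $\chi'$, proving \cref{proposition}. The main obstacle is isolating a single primitive symplectic transvection whose action (i) lies in the image of $\Mod(S)$ on $H_1$, (ii) changes $\det(a_2,b_2)$ by integer multiples of the small quantity $\Re(a_2)$ rather than of $|a_2|$ or $\Im(a_2)$, and (iii) keeps $\det(a_1,b_1)$ positive. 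The class $v=b_1+a_2$ satisfies all three conditions precisely because of the normalisation $b_1=i$, which forces the three steps to be executed in the stated order.
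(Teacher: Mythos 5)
Your proof is correct and takes essentially the same route as the paper: both use the density of $\Re(a_2)\Z+\Re(b_2)\Z$ to produce a primitive integer combination of $a_2,b_2$ with controlled real part, then apply a symplectic shear mixing the two handles (fixing $b_1,a_2$) to transfer area between them in integer multiples of $\Re(a_2)$, forcing $0<\det(a_2,b_2)<\epsilon\det(a_1,b_1)$ so that \cref{ratiogagnant} applies. The only (cosmetic) difference is that the paper chooses the combination so that $\det(a_1,b_1)$ lands directly in the window $(\vol\chi/(1+\epsilon),\vol\chi)$, whereas you first shrink $\Re(a_2)$ below $\delta$ and then step $\det(a_2,b_2)$ into $(0,\Re(a_2)]$.
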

\begin{proof}
There exists $n,m\in \Z$ such that $x = \Re(na_2 + mb_2)$ satisfies
$$\frac{\vol \chi}{1+\epsilon}< x+1 <\vol\chi.$$
Let $k = \gcd(n,m)$ and $n' = n/k$, $m' = m/k$.
We may replace $a_2$ with $n'a_2 + m'b_2$ by applying $A\in \mathrm{Sp}_4(\Z)$ of the form $A = \mathrm{Id}_2 \oplus B$ where $B\in \SL\Z$. 
We now apply the following transformation of $\mathrm{Sp}_4(\mathbb Z)$ : $(a_1, b_1, a_2, b_2)\rightarrow (a_1 + ka_2, b_1, a_2, b_2 - kb_1)$. Thus $\det(a_1, b_1) = x+1$, and $$0 < \det(a_2, b_2) = \vol \chi - \det(a_1, b_1)< \epsilon\det(a_1, b_1).\qedhere$$
\end{proof}

We can now suppose that $a_2 = x + iy$ and $b_2 = x' + iy'$ are such that the group generated by $x$ and $x'$ is discrete. We also assume that the group generated by $y$ and $y'$ is dense, otherwise we could return to the previous case by applying the transformation $(1, i, a_2, b_2)\rightarrow (-i, 1, a_2, b_2)$.
\begin{lemma}\label{euclide}
We can assume that $x' = 0$.
\end{lemma}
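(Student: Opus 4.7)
The plan is to exploit the discreteness hypothesis on $\langle x, x'\rangle\subset \R$ to run a Bezout-style reduction using the $\mathrm{SL}_2(\Z)$ inside $\mathrm{Sp}_4(\Z)$ that acts only on the second handle, and so is realized (via Dehn--Nielsen--Baer) by an element of $\Mod(S)$ that fixes $a_1,b_1$.

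First I observe that the discrete subgroup $\langle x, x'\rangle$ of $\R$ equals $d\Z$ for some $d\geqslant 0$. If $d=0$ then $x=x'=0$ and there is nothing to do, so assume $d>0$ and write $x = pd$, $x' = qd$ with $p,q\in \Z$. Since $x$ and $x'$ generate $d\Z$, the integers $p,q$ are coprime. By Bezout, there exist $r,s\in \Z$ with $ps - qr = 1$, so that
$$B = \begin{pmatrix} s & -r \\ -q & p\end{pmatrix}\in \SL{\Z}$$
satisfies $B(x, x')^T = (d, 0)^T$. I then apply the symplectic transformation $\mathrm{Id}_2\oplus B \in \mathrm{Sp}_4(\Z)$, which corresponds to some $\gamma\in \Mod(S)$ acting trivially on the first handle and changing $(a_2, b_2)$ to $(sa_2 - rb_2,\, -qa_2 + pb_2)$. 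After this replacement the new $b_2$ has real part zero, as desired.

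Finally I must check that the other hypotheses maintained up to this point are preserved. The pair $(a_1, b_1) = (1,i)$ is untouched. The group generated by the real parts of the new $a_2, b_2$ remains the same $d\Z$, so it is still discrete; similarly, the group generated by their imaginary parts is a $\Z$-basis change of $\langle y, y'\rangle$, and thus remains dense. The determinant $\det(a_2, b_2)$ is preserved since $\det B = 1$, so in particular its sign and the volume $\vol \chi$ are unchanged. There is no real obstacle here; the lemma is essentially one application of the Euclidean algorithm, made available by the fact that $(x,x')$ lives on a lattice inside $\R$.
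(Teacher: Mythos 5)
Your proof is correct and follows essentially the same route as the paper: both exploit the discreteness of $\langle x, x'\rangle\subset\R$ to run a Euclidean-algorithm reduction via an element $\mathrm{Id}_2\oplus B$, $B\in\SL{\Z}$, acting only on the second handle. The only cosmetic difference is that you package the reduction as a single Bezout matrix sending $(x,x')$ to $(d,0)$, whereas the paper iterates the subtractive steps $(a_2,b_2)\mapsto(a_2\pm b_2,b_2)$ or $(a_2,b_2\pm a_2)$ until one real part vanishes, with termination guaranteed by discreteness.
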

\begin{proof}
Apply the transformation $(a_2, b_2)\rightarrow (a_2 \pm b_2, b_2)$ or $(a_2, b_2)\rightarrow (a_2, b_2\pm a_2)$ so that $\max(|\Re(a_2)|, |\Re(b_2)|)$ decreases. Since the group generated by $x$ and $x'$ is discrete, iterating this process will end up with $x$ or $x' = 0$. Note that we have already used this Euclidean algorithm's argument in \cref{ameliorerlattice}.
\end{proof}

\begin{lemma}\label{diviser}
If both $\det(a_1, b_1)\geqslant\det(a_2, b_2)$ are not $0$, there exists $\gamma\in \Mod(S)$ such that $\chi' =\gamma\cdot\chi = (a'_1, b'_1, a'_2, b'_2)$ satisfies $$0\leqslant\det(a'_1, b'_1) \leqslant \frac{\det(a_1, b_1)}2.$$
Moreover, if $\det(a_1, b_1) \neq \det(a_2, b_2)$, we may assume that $0 < \det(a_1', b_1')$.
\end{lemma}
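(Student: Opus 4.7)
The plan is to produce $\gamma$ by a single symplectic transvection, exploiting the density of the additive subgroup $y\Z + y'\Z \subset \R$ that is in force in the present branch of the argument. We already have $a_1 = 1$, $b_1 = i$, $a_2 = x + iy$, $b_2 = iy'$, so $\det(a_1, b_1) = 1$, and the goal becomes $0 \leqslant \det(a_1', b_1') \leqslant 1/2$.

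For each pair of integers $r, s$ I would take $v := a_1 + r a_2 + s b_2 \in \Z^4$; this is primitive since its $a_1$-coefficient equals $1$, and hence the symplectic transvection $T_v(w) = w + \omega(w, v)\, v$ defines an element of $\mathrm{Sp}_4(\Z)$ which lifts to some $\gamma \in \Mod(S)$ via the Dehn--Nielsen--Baer identification $\Mod(S) \cong \mathrm{Out}^+(\Gamma)$ together with the standard surjection $\mathrm{Out}^+(\Gamma) \twoheadrightarrow \mathrm{Sp}_4(\Z)$ (concretely, a Dehn twist around a nonseparating simple closed curve representing $v$). Using $\omega(a_1, v) = 0$ and $\omega(b_1, v) = -1$, the new first handle is $a_1' = a_1$ and $b_1' = b_1 - v = b_1 - a_1 - r a_2 - s b_2$, so that
\[
\det(a_1', b_1') = \det(a_1, b_1) - r \det(a_1, a_2) - s \det(a_1, b_2) = 1 - (r y + s y').
\]

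The last step is to invoke the density hypothesis: since $y\Z + y'\Z$ is dense in $\R$, I can choose $r, s \in \Z$ with $r y + s y' \in (1/2, 1)$, landing $\det(a_1', b_1')$ inside $(0, 1/2)$. This yields both the main inequality and the strict positivity required by the moreover clause (in fact the hypothesis $\det(a_1, b_1) \neq \det(a_2, b_2)$ is not needed here). The only point that requires some care is the lift of the $\mathrm{Sp}_4(\Z)$ transvection to $\Mod(S)$, which is standard; the determinant computation and the application of density are both routine.
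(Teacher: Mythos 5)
Your proof is correct, but it takes a genuinely different route from the paper's. The paper keeps the shape of the Euclidean-algorithm moves used earlier: it applies the shear $(a_1, b_1, a_2, b_2)\mapsto(a_1, b_1+kb_2, a_2-kb_1, b_2)$ (or its mirror using $x$) with $k$ a suitably signed power of $2$, chosen so that $1+ky'\in[0,\tfrac12]$; this uses only $x'=0$ and $0<|xy'|=|\det(a_2,b_2)|\leqslant 1$, and the boundary cases of that dyadic choice are exactly what force the weak inequality $0\leqslant\det(a_1',b_1')$ and the extra hypothesis in the ``moreover'' clause. You instead use a single transvection along the primitive class $v=a_1+ra_2+sb_2$, whose effect on $\det(a_1',b_1')=1-(ry+sy')$ you control via the density of $y\Z+y'\Z$; your symplectic computation ($\omega(a_1,v)=0$, $\omega(b_1,v)=-1$, hence $\det(a_1',b_1')=1-ry-sy'$) is correct, and the lift to $\Mod(S)$ is indeed standard. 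What your version buys: $\det(a_1',b_1')$ lands in the open interval $(0,\tfrac12)$ unconditionally, so the ``moreover'' hypothesis is indeed superfluous and the subsequent iteration in \cref{proposition} never needs to fall back on \cref{zero} to escape a vanishing determinant. What it costs: your argument imports the standing assumption that $y\Z+y'\Z$ is dense, which is in force in this branch (it is exactly why one did not rotate back to the dense-real-projection case) but is not part of the lemma's stated hypotheses, whereas the paper's proof of this particular lemma is insensitive to whether that group is dense or discrete. You should state explicitly that you are using this standing hypothesis, since on each pass of the iteration it must be re-established (by re-running the dichotomy and \cref{euclide}) before your choice of $r,s$ is available.
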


\begin{proof}
Let us assume again that $(a_1, b_1) = (1, i)$.
We have $0 <|xy'| = |\det(a_2, b_2)| \leqslant \det(a_1, b_1) = 1$ since $\det(a_1, b_1) + \det(a_2, b_2) = \vol\chi$, thus $0<|y'|\leqslant1$ or $0 < |x| \leqslant 1$, and $|x| < 1$ or $|y'| < 1$ if $\det(a_1, b_1)\neq \det(a_2, b_2)$. In the former case, replace $(a_1, b_1, a_2, b_2)$ with $(a_1, b_1 + kb_2, a_2 - kb_1, b_2)$, where $k$ is such that $0\leqslant (1+ky')\leqslant \frac 1 2$. Such an integer exists: it is equivalent to $\frac{1}{2}\leqslant-ky'\leqslant 1$ which is possible with $k$ an appropriate power of $2$. Note that if $|y'| < 1$, we may assume that $0 < 1 + ky'$.
In the second case, replace $\chi$ with $(a_1 + ka_2, b_1, a_2, b_2 - kb_1)$ where $0\leqslant 1+kx \leqslant \frac 1 2$. In both cases the new $\det(a_1, b_1)$ is less than half the old one.
\end{proof}

It might happen that $\det(a_2, b_2) = 0$. Let us explain how to proceed in this case.

\begin{lemma}\label{zero}
If $\det(a_2, b_2) = 0$, then there exists $\gamma\in \Mod(S)$ such that replacing $\chi$ with $\gamma\cdot\chi$, we have $\det(a_1, b_1) > 0$ and $\det(a_2, b_2) > 0$ and $\det(a_1, b_1)\neq \det(a_2, b_2)$.
\end{lemma}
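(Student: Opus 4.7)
My plan is to apply a single symplectic transvection and to exploit the density of $\Z y + \Z y'$ in $\R$.

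First I would unpack what the preceding discussion already gives: by \cref{euclide} we may assume $x' = 0$, so that $b_2 = iy'$. Then $\det(a_2, b_2) = xy' = 0$ forces $x = 0$ or $y' = 0$, and the hypothesis that $\Z y + \Z y'$ is dense in $\R$ rules out $y' = 0$ (which would make the group equal to the discrete $\Z y$); hence $x = 0$ and $a_2 = iy$ with both $y, y'$ nonzero.

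Next, for integers $m, n$ to be chosen later, I would consider the symplectic transvection associated with the vector $v := a_1 + m a_2 + n b_2 \in \Gamma^{\mathrm{ab}}$. This lies in $\mathrm{Sp}_4(\Z)$ and, by Dehn--Nielsen--Baer, is realized by some $\gamma \in \Mod(S)$. Using $\omega(a_1, v) = 0$, $\omega(b_1, v) = -1$, $\omega(a_2, v) = n$, $\omega(b_2, v) = -m$ together with $\chi(v) = 1 + iT$, where $T := my + ny'$, I would compute the four new holonomies
\begin{align*}
(\gamma\cdot\chi)(a_1) &= 1, & (\gamma\cdot\chi)(b_1) &= -1 + i(1-T),\\
(\gamma\cdot\chi)(a_2) &= n + i(y + nT), & (\gamma\cdot\chi)(b_2) &= -m + i(y' - mT),
\end{align*}
from which a direct calculation yields the new determinants
$$\det(a_1, b_1) = 1 - T, \qquad \det(a_2, b_2) = T.$$

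Finally, since $\Z y + \Z y'$ is dense in $\R$, I can choose $m, n \in \Z$ with $T \in (0, 1/2)$; then $0 < T < 1/2 < 1 - T$ makes both new determinants strictly positive and distinct, as required. I do not foresee any real obstacle here: the crucial observation is that the transvection along $v = a_1 + m a_2 + n b_2$ produces exactly the scalar $my + ny'$ as the new value of $\det(a_2, b_2)$, and the density of $\Z y + \Z y'$ in $\R$ does the rest.
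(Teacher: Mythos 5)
Your transvection computation is correct as far as it goes: for $v=a_1+ma_2+nb_2$ the new determinants are indeed $1-T$ and $T$ with $T=my+ny'$, and any element of $\mathrm{Sp}_4(\Z)$ is realized by a mapping class. The gap is that the entire argument rests on the density of $\Z y+\Z y'$ in $\R$, and that hypothesis is not available where \cref{zero} is actually needed. The sentence preceding \cref{euclide} does literally say that ``the group generated by $y$ and $y'$ is dense,'' but this is a slip for \emph{discrete}: if $\Z y+\Z y'$ were dense, one would apply $(1,i,a_2,b_2)\mapsto(-i,1,a_2,b_2)$ (a quarter turn after renormalizing) and conclude directly from the earlier lemma on dense real projections, so \cref{zero} would never be invoked; and the paper's own proof of \cref{zero} deduces $\lambda\in\mathbb Q$ precisely from the fact that neither $\Z x+\Z x'$ nor $\Z y+\Z y'$ is dense. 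In the regime where the lemma is used, both projections are discrete, and your proof then fails at two points. First, $xy'=0$ no longer forces $x=0$: for $\chi=(1,i,i\sqrt2,0)$ one has $y'=0$. Second, and more fatally, the set of values attainable for the new $\det(a_2,b_2)$ by your family of transvections is exactly $\Z y+\Z y'$; for $\chi=(1,i,i\sqrt2,0)$ this is $\sqrt2\,\Z$, which meets $(0,1)$ in no point, so no choice of $m,n$ produces two positive determinants summing to $1$.

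The paper's route through the discrete case is different in kind: it uses irrationality rather than density. One reduces to $\chi=(1,i,z,0)$ with $z=x+iy$ and notes that $x\notin\mathbb Q$ or $y\notin\mathbb Q$, since otherwise $\chi(\Gamma)$ would be a lattice. Say $x\notin\mathbb Q$ (otherwise rotate first). Translating $a_2$ by a multiple of $a_1$ puts $x$ in $(0,1)$, and then the move $(a_1,b_1,a_2,b_2)\mapsto(a_1-a_2,b_1,a_2,b_2+b_1)$ transfers the irrational area $x$ into the second handle, giving determinants $1-x$ and $x$, both positive and distinct since $x\neq\tfrac12$. If you want to keep your transvection formalism, the missing ingredient is a preliminary move that feeds a copy of $a_1$ (whose real part is $1$) into $a_2$ so that the quantity playing the role of $T$ can be pushed into $(0,1)$; density of $\Z y+\Z y'$ alone cannot be the engine of the proof.
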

\begin{proof}
We can suppose that $\chi = (1, i, z, \lambda z)$ for some $z = x + iy\in \C$, $\lambda\in \R$. Note that $z\neq 0$, otherwise $\chi(\Gamma)$ would be a lattice. Moreover, $\lambda\in \mathbb Q$, because otherwise the group generated by $x, \lambda x$ or $y$, $\lambda y$ would be dense. Thus we can assume that $\lambda = 0$ by the argument of \cref{euclide}. We thus have $\chi = (1, i, z, 0)$. Either $x\notin \mathbb Q$ or $y\notin \mathbb Q$ since $\vol \chi > 0$ and $\chi(\Gamma)$ is not a lattice. If $x\notin \mathbb Q$, apply the transformation $(a_1, b_1 - kb_2, a_2 + ka_1, b_2)$ so that $0< x < 1$. We apply $(a_1 -a_2, b_1, a_2, b_2 + b_1)$, that replaces $\det(a_1, b_1)$ with $0<\det(a_1, b_1) - x < 1$. Moreover, $\det(a_1, b_1) - x = 1-x\neq \frac{1}{2}$ since $x\notin \mathbb Q$.
If $y\notin \mathbb Q$, we can proceed in the same way after applying $(a_1, b_1)\rightarrow (b_1, -a_1)$.
The result follows since $\det(a_1, b_1) + \det(a_2, b_2) = 1$.
\end{proof}

We are now able to prove \cref{proposition}. Let us normalize the volume: $\vol\chi = 1$.
\begin{proof}
Iterating \cref{diviser}, and using \cref{zero} when necessary, we can assume that $x = \det(a_1, b_1) > 0$ and $\det(a_2, b_2) > 0$, \textit{i.e.} that $\frac{1}{2} \leqslant x < 1$. We can moreover assume that $x \neq \frac{1}{2}$. Indeed if $x=\frac{1}{2}$, use one more time \cref{diviser}, and then \cref{zero} if necessary.

It suffices to show that we can have $\frac{2}{3}\leqslant x < 1$. Indeed, in this case, $\frac{\det(a_2, b_2)}{\det(a_1, b_1)}\leqslant \frac{1-x}{x} \leqslant \frac{1}{2} < \epsilon$, and we conclude with \cref{ratiogagnant}. If $x < \frac{2}{3}$, then applying \cref{diviser}, we replace $x$ with $x'$ that satisfies $\frac{2}{3} < 1 - \frac x 2 \leqslant x' < 1$.
\end{proof}

\bibliographystyle{plain}
\bibliography{bibpapier.bib}

\end{document}